\documentclass[11pt,letterpaper]{amsart}
\usepackage[utf8]{inputenc}
\usepackage{float}
\usepackage{amsmath}
\usepackage{amsfonts}
\usepackage{amssymb}
\usepackage{amsthm}
\usepackage{graphicx}
\usepackage{lmodern}
\usepackage{color}
\usepackage[left=2cm,right=2cm,top=2cm,bottom=2cm]{geometry}

\newtheorem{theorem}{Theorem}[section]

\newtheorem{proposition}[theorem]{Proposition}
\newtheorem{prop}[theorem]{Proposition}


\newcommand{\C}{\mathbb{C}}
\newcommand{\R}{\mathbb{R}}
\newcommand{\RP}{\mathbb{RP}}

\newcommand{\Z}{\mathbb{Z}}


\newcommand{\Pa}{{\mathcal{P}}}

\begin{document}

\title{Apollonian Packings and Kac-Moody Root Systems}

\author{Ian Whitehead}
\date{}
\begin{abstract}
We study Apollonian circle packings in relation to a certain rank 4 indefinite Kac-Moody root system $\Phi$. We introduce the generating function $Z(\mathbf{s})$ of a packing, an exponential series in four variables with an Apollonian symmetry group, which relates to Weyl-Kac characters of $\Phi$. By exploiting the presence of affine and Lorentzian hyperbolic root subsystems of $\Phi$, with automorphic Weyl denominators, we express $Z(\mathbf{s})$ in terms of Jacobi theta functions and the Siegel modular form $\Delta_5$.  We also show that the domain of convergence of $Z(\mathbf{s})$ is the Tits cone of $\Phi$, and discover that this domain inherits the intricate geometric structure of Apollonian packings.
\end{abstract}
\maketitle

\section{Introduction} \label{intro}

The aim of this article is to study Apollonian circle packings from the perspective of Kac-Moody theory, motivating new questions about packings and Kac-Moody root systems. First we must explain why these two topics are related at all. Figure \ref{quadruple} shows a quadruple of mutually tangent circles in the plane, with curvatures $c_1, c_2, c_3, c_4$. 

\begin{figure}[h]
\center{\includegraphics[scale=.5]{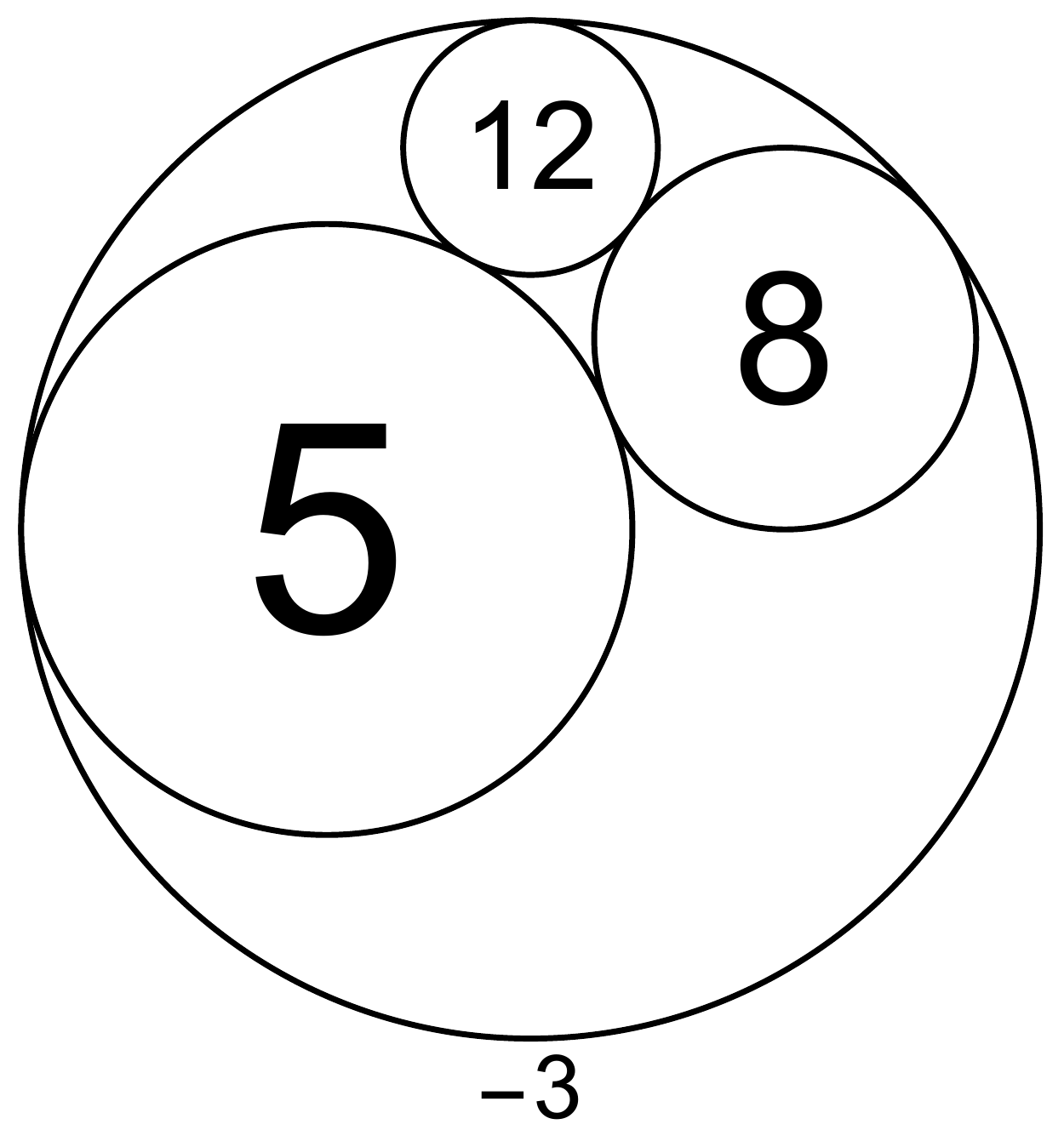}}
\caption{Descartes quadruple with $-3\alpha_1+5\alpha_2+8\alpha_3+12\alpha_4$}
\label{quadruple}
\end{figure}

By convention, we take $c_i$ to be negative if this circle is external to the other three; a circle can also degenerate to a straight line with curvature $0$. Descartes discovered that the four curvatures satisfy a quadratic equation:
\begin{equation}\label{DescartesForm}
2c_1^2+2c_2^2+2c_3^2+2c_4^2-(c_1+c_2+c_3+c_4)^2 = 0
\end{equation}
The quadratic form on the left side of \eqref{DescartesForm} corresponds to the Cartan matrix:
\begin{equation}
\left(\begin{array}{rrrr} 2 & -2 & -2 & -2 \\ -2 & 2 & -2 & -2 \\ -2 & -2 & 2 & -2 \\ -2 & -2 & -2 & 2 \end{array}\right)
\end{equation}
of an indefinite symmetric Kac-Moody root system $\Phi$. To get a sense of the complexity of $\Phi$, note that the principal submatrices of rank 2 yield root subsystems of affine type $A_1^{(1)}$. The principal submatrices of rank 3 yield root subsystems of hyperbolic type $H^{(3)}_{71}$. (The notation $H^{(3)}_{71}$ is taken from Carbone et al. \cite{MR2608277}; this root system is called $A_{1, II}$ in \cite{MR1438983} and $\Pi_{3, 1}$ in \cite{MR3335126}.) However, the root system $\Phi$ itself is not affine or hyperbolic.

The Cartan matrix of $\Phi$ defines a symmetric bilinear form $(\, , \, )$ of signature $(3,1)$ on $\R^4$ equipped with the basis of simple real roots $\alpha_1, \alpha_2, \alpha_3, \alpha_4$. The Weyl group of $\Phi$ is the Apollonian group: 
\begin{equation}
W=\langle \sigma_1, \sigma_2, \sigma_3, \sigma_4 | \sigma_i^2=1 \rangle
\end{equation}
with action on $\R^4$ determined by $\sigma_i(\alpha_j)=\alpha_j-(\alpha_j, \alpha_i)\alpha_i$. This preserves the form $F$. If $c_1, c_2, c_3, c_4$ are the curvatures of a quadruple of mutually tangent circles, the action of $W$ on $\mathbf{c}=c_1\alpha_1+c_2\alpha_2+c_3\alpha_3+c_4\alpha_4$ has a beautiful geometric interpretation. If three mutually tangent circles are fixed, then there exist exactly two circles which are tangent to all three. The mapping $\sigma_i$ corresponds to fixing circles of curvature $c_j$ for $j\neq i$, and swapping out the circle of curvature $c_i$. This can be interpreted as a M\"{o}bius transformation of the complex plane: an inversion across a circle containing the points of tangency of the three fixed circles. 

The orbit of $W$ on an initial Descartes quadruple $\mathbf{c}$ is an Apollonian circle packing, shown in Figure \ref{packing}.

\begin{figure}[h]
\center{\includegraphics[scale=.75]{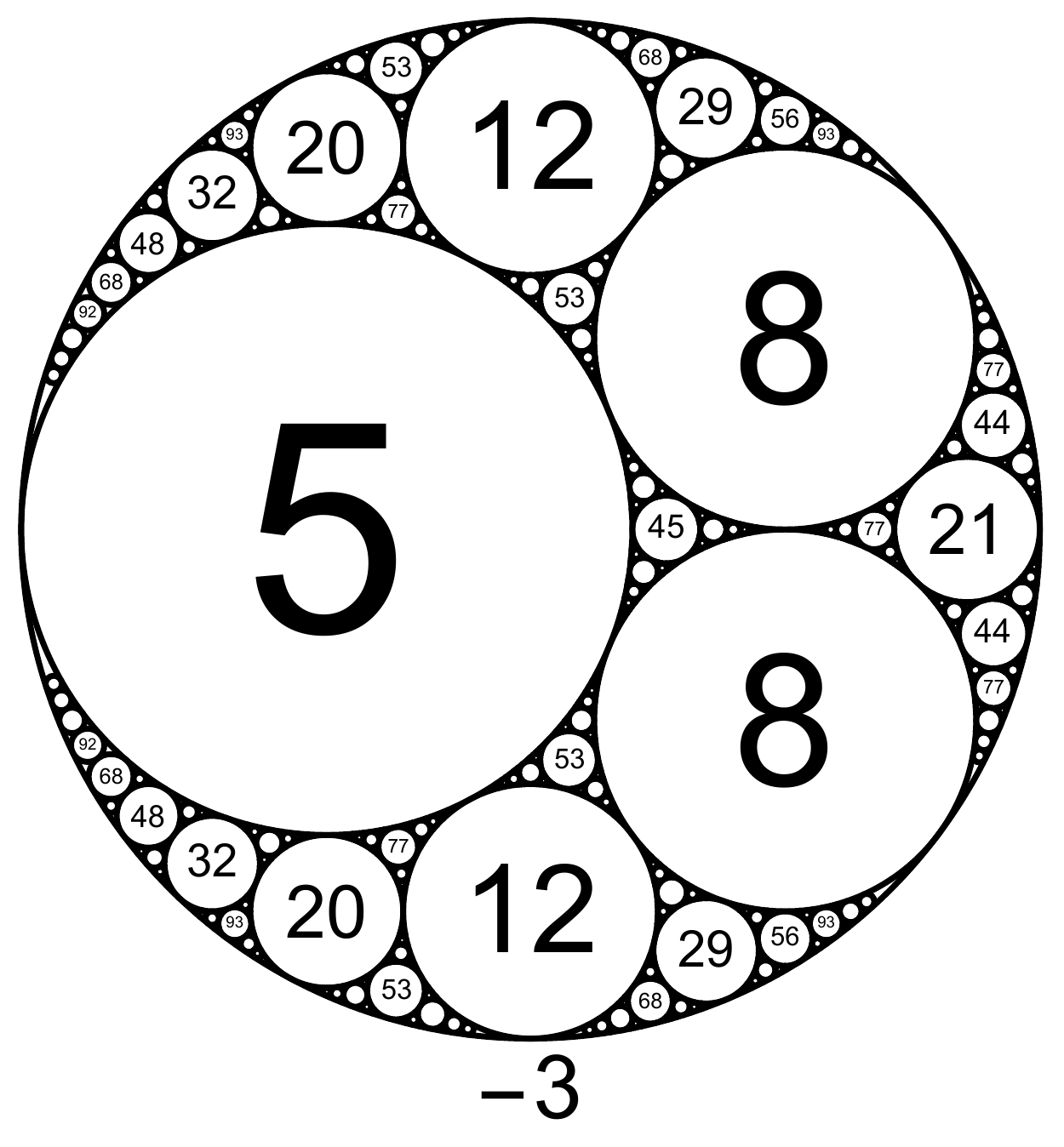}}
\caption{Apollonian packing}
\label{packing}
\end{figure}

A quadruple of mutually tangent circles $\mathbf{c}' = c_1'\alpha_1+c_2'\alpha_2+c_3'\alpha_3+c_4'\alpha_4$ appears in this figure if and only if it can be obtained from $\mathbf{c}$ by an element of $W$. We will denote the multiset of Descartes quadruples in the packing as $\Pa$ (the multiplicity of quadruples in the packing will be discussed in the proof of Prop. \ref{prop:firstdomain} below). Notice that if the initial quadruple  $\mathbf{c} \in \Z\alpha_1 \oplus \Z\alpha_2 \oplus \Z\alpha_3 \oplus \Z\alpha_4$, then so is every quadruple in the packing; in this case the packing is called integral. The number-theoretic study of integral Apollonian packings has experienced a renaissance in the last 20 years; see \cite{MR1971245, MR2800340}.

A bounded integral packing $\Pa$ can be considered as a $W$-orbit in the root lattice of $\Phi$. This orbit exhibits behavior not seen in finite, affine, or even hyperbolic types: it is bounded below but not contained in the positive cone. We define the height $\mathrm{ht}(c_1\alpha_1+c_2\alpha_2+c_3\alpha_3+c_4\alpha_4)=c_1+c_2+c_3+c_4$. There exists a unique quadruple in $\Pa$, called the base quadruple, of minimal height. (This is usually called the root quadruple, but we use the term base quadruple to avoid confusion with root system terminology.) The base quadruple is an antidominant vector in the root lattice. The orbit $\Pa$ consists of vectors $\mathbf{c}$ of length squared $(\mathbf{c},\mathbf{c})=0$, and height bounded below. Imaginary roots in the root lattice also have length squared $\leq 0$ and height bounded below. But imaginary roots have another important property which elements of $\Pa$ lack: all their $W$-translates lie in either the positive or negative cone. The base quadruple in $\Pa$, and others involving the exterior circle, have three positive coordinates and one negative. Thus $\Pa$ is not an orbit of imaginary roots. The unbounded integral packings with base quadruple some permutation of $(n,n,0,0)$ do correspond to orbits of imaginary roots; indeed, these are imaginary roots of the affine root subsystems $A_1^{(1)}$.  

Fix a bounded packing $\Pa$ with base quadruple $\mathbf{c}$. We will study a series that can be considered as a generating function for $\Pa$, or a symmetric function for the Apollonian group $W$. Let $\omega_1, \omega_2, \omega_3, \omega_4$ denote the dual basis of fundamental weights in $\Phi$. For $s_1, s_2, s_3, s_4 \in \C$, let $\mathbf{s}=s_1 \omega_1 +  s_2 \omega_2 + s_3 \omega_3 + s_4 \omega_4$. Define
\begin{equation}
Z(\mathbf{s})=\sum_{w \in W} e^{-(w\mathbf{c}, \mathbf{s})}=\sum_{c_1\alpha_1+c_2 \alpha_2+c_3\alpha_3+c_4\alpha_4 \in \Pa} e^{-c_1s_1-c_2s_2-c_3s_3-c_4s_4}
\end{equation}
This series inherits an infinite group of symmetries $W$ from the symmetries of $\Pa$. Its analytic properties--convergence, growth, zeroes and poles--can translate into information about the asymptotic behavior of quadruples in $\Pa$.  

We will study two features of the series $Z(\mathbf{s})$: its relation to automorphic forms and its convergence. In Section \ref{theta}, we relate it to Jacobi theta functions using the affine $A_1^{(1)}$ root subsystem. Theorem \ref{prop:theta} gives an expansion of $Z(\mathbf{s})$ in terms of theta functions. In Section \ref{delta}, we take advantage of the hyperbolic $H^{(3)}_{71}$ root subsystem, which is a foundational example in Gritsenko and Nikulin's theory of Lorentzian Kac-Moody root systems \cite{MR1616925, MR1616929}. This theory yields a surprising connection between $Z(\mathbf{s})$ and the Siegel automorphic form $\Delta_5$ on $\mathrm{Sp}(4)$. Theorem \ref{theorem:Delta} makes this connection. These sections are intended to lay the groundwork for further study of $Z(\mathbf{s})$ from an automorphic perspective. 

In sections \ref{convergence} and \ref{geometry} we describe the domain of absolute convergence of $Z(\mathbf{s})$, a four-dimensional region which we call the Apollonian cone $A$. Theorem \ref{theorem:A} establishes that the Apollonian cone is essentially the Tits cone for $\Phi$. This domain is independent of $\Pa$ and has a rich geometry related to Apollonian packings. In Theorem \ref{theorem:Ageometry}, we give a complete geometric description of $A$. A cross-section of $A$ is shown in Figure \ref{domaininfinity}. This domain is reminiscent of the fundamental domain for the Apollonian group acting on a 3-dimensional hyperbolic half-space, as in \cite{MR2784325} and elsewhere. However, our approach does not use the isomorphism between $\mathrm{SO}^+(3,1)$ and $\mathrm{PSL}(2, \C)$, or any hyperbolic geometry at all. It is purely on the orthogonal group side. In essence, these sections allow us to rediscover Apollonian packings based solely on the Descartes quadratic form and the associated root system. Our argument provides a template to study the geometry of Tits cones for Kac-Moody root systems more generally.

Chen and Labb\'{e} have studied the set of limit roots in certain Kac-Moody root systems and related them to sphere packings \cite{MR3303040}. Their work involves similar visualizations to ours of the action of hyperbolic Coxeter groups on the root space. However, the Apollonian cone constructed here seems to be original, although the method of construction requires no specialized tools. 

We will sketch one application of the series $Z(\mathbf{s})$, which also illustrates why automorphicity is an important concern. There has been great interest in the Apollonian ``$L$-function''
\begin{equation}
L(u)= \sum_{c \in \Pa^*} c^{-u}
\end{equation}
Here $\Pa^*$ is the collection of curvatures of circles in $\Pa$, again counted with multiplicity). This series is known to converge for $\Re(u) > \delta$, where $\delta\approx 1.30568$ is the Hausdorff dimension of the residual set of any packing \cite{MR493763}. Meromorphic continuation to the left of $\delta$ would yield an asymptotic for the growth of circles in $\Pa$. Important work of Kontorovich and Oh \cite{MR2784325} and of Lee and Oh \cite{MR3053757} has shown that
\begin{equation}
|\lbrace c \in \Pa^* | c<X \rbrace | =r X^{\delta} + O(X^{\delta-\tfrac{2(\delta-s_1)}{63}})
\end{equation}
where $r$ is a constant depending on the packing, and $s_1$ is a constant independent of the packing. Their approach is based on equidistribution of horocycles on a hyperbolic 3-manifold and does not explicitly involve $L(u)$. Meromorphic continuation of $L(u)$ would yield a new proof.

In fact, $L(u)$ can be obtained from $Z(\mathbf{s})$ by an integral transform. First, for $t>0$, we take
\begin{equation} \label{mellin}
\begin{split}
Z_1(t)=\tfrac{1}{2\pi i} \int_{(\tfrac{1}{2})} \, & Z(st\omega_1+st\omega_2+st\omega_3+(1-s)t\omega_4)+Z(st\omega_1+st\omega_2+(1-s)t\omega_3+st\omega_4) \\ & +Z(st\omega_1+(1-s)t\omega_2+st\omega_3+st\omega_4)+Z((1-s)t\omega_1+st\omega_2+st\omega_3+st\omega_4) \, \tfrac{ds}{s}
\end{split}
\end{equation}
where the integral on the vertical line $\Re(s)=\tfrac{1}{2}$ is taken in the principal value sense.  

An individual summand in $Z(st\omega_1+st\omega_2+st\omega_3+(1-s)t\omega_4)$ has the form $e^{(-c_1-c_2-c_3+c_4)st-c_4t}$. The integral in $s$ will be $0$ if $c_1+c_2+c_3 > c_4$, $\tfrac{e^{-c_4t}}{2}$ if $c_1+c_2+c_3 = c_4$, and $e^{-c_4t}$ if $c_1+c_2+c_3 < c_4$. The results of the integration for the three other terms of the integrand are entirely parallel. For any Descartes quadruple $(c_1, c_2, c_3, c_4) \in \Pa$ other than the base quadruple, there is a unique Apollonian group reflection $\sigma_i$ which reduces $c_i$, yielding a Descartes quadruple of larger circles in $\Pa$. For this $i$, we have $c_i>\sum\limits_{j \neq i} c_j$, and $c_i$ is the maximal circle in the quadruple. It follows that the integral of 
\begin{equation}
e^{(-c_1-c_2-c_3+c_4)st-c_4t}+e^{(-c_1-c_2+c_3-c_4)st-c_3t}+e^{(-c_1+c_2-c_3-c_4)st-c_2t}+e^{(c_1-c_2-c_3-c_4)st-c_1t}
\end{equation}
will be simply $e^{-\max(c_i) t}$. The base quadruple does not contribute to the integral at all. Note that packings with symmetry type $D_2$, i.e. with the base quadruple being a multiple of $(-1, 2, 2, 3)$, contain two copies of the base quadruple. In this case each copy of the base quadruple $(c_1, c_2, c_3, c_4)$ will contribute $\tfrac {e^{-\max(c_i) t}}{2}$, so it is as if one copy of the base quadruple is removed. 

Because each $c \in \Pa^*$ is the maximum of a unique Descartes quadruple other than the base quadruple, we have shown that 
\begin{equation}
Z_1(t)+e^{-c_1t}+e^{-c_2t}+e^{-c_3t}+e^{-c_4t}=\sum_{c \in \Pa^*} e^{-ct}
\end{equation}
where $c_1, c_2, c_3, c_4$ are the four curvatures of the base quadruple. Finally, a Mellin transform in $t$ yields
\begin{equation} \label{mellin2}
\int_0^{\infty} \, t^u (Z_1(t)+e^{-c_1t}+e^{-c_2t}+e^{-c_3t}+e^{-c_4t}) \, \tfrac{dt}{t} \, = \, \Gamma(u) L(u) 
\end{equation}
We will see from Proposition \ref{prop:firstdomain} that $Z(\mathbf{s})$ converges absolutely in the domain of the two integrations. The first integral converges conditionally in the principal value sense. The integrand in the second integral has a potential singularity at $t=0$. The integral converges as $t \to \infty$ because of the rapid decay of $Z_1(t)$, but it may diverge as $t \to 0$ for sufficiently small $\Re(u)$. 

One would hope to meromorphically continue $L(u)$ following the procedure of Riemann's second proof of the meromorphic continuation and functional equation for the zeta function. This requires finding a symmetry for $Z_1(t)$ in $t \mapsto \tfrac{1}{t}$. Such a symmetry does not arise from the group of functional equations $W$ for $Z(\mathbf{s})$, but it might come from additional automorphic behavior. In particular, both the theta functions of Section \ref{theta} and the Siegel automorphic form $\Delta_5$ of Section \ref{delta} possess such symmetries. 

The problem is one of automorphic correction. Characters of affine Kac-Moody root systems are essentially theta functions with an $\mathrm{SL}(2, \, \Z)$ symmetry beyond the affine Weyl group \cite{Kac}. Beginning with examples due to Feingold and Frenkel \cite{MR697333} and  Borcherds \cite{MR1069386}, there have been automorphic interpretations for the Weyl denominator functions of certain indefinite Kac-Moody root systems. The principle seems to be that these functions are not automorphic on their own, but, in many cases, one can add extra imaginary roots to produce a generalized Kac-Moody root system whose denominator has automorphic properties. In its sum expression, the corrected denominator function contains infinitely many different Weyl orbits, not just one. In its product expression, the multiplicities of the imaginary roots can be expressed explicitly. 

Gritsenko and Nikulin give conditions on generalized Kac-Moody algebras which are good candidates for automorphic correction in \cite{MR1992083}. These are called Lorentzian Kac-Moody algebras (not to be confused with Lorentzian inner forms and lattices). They have two fundamental properties. The first, the existence of a lattice Weyl vector $\rho$, is straightforward to check for $\Phi$. The second property, that a fundamental domain for $W$ has finite hyperbolic volume in the cone of vectors with negative length squared, does not hold for $\Phi$. This will be discussed further in Section \ref{geometry}.

So the Apollonian root system $\Phi$ itself is not one of the best candidates for automorphic correction. Perhaps this does not foreclose the possibility of generalizing Riemann's proof as discussed above, but it indicates the inherent difficulty of doing so. As another indication, note that the constant $\delta$ would appear in the calculation as a pole of $L(u)$. Since little is known about this constant, it is not clear how it would arise. A more tractable problem is to make use of other Lorentzian root subsystems of $\Phi$ (beyond the principal ones). This can yield new information about the density of curvatures appearing in different subsets of an Apollonian packing. 

Having established a connection between Apollonian packings and Kac-Moody root systems, we will suggest some possible generalizations on both sides. On the Kac-Moody side, one could begin with an indefinite quadratic form of similar complexity to $F$, and ask what fractal geometry arises from its Weyl group of symmetry and Tits cone. Is some analogue of an Apollonian packing involved? What kinds of discrete group actions on hyperbolic spaces are obtained in this way? Can a fractal dimension analogous to $\delta$ be associated to other Kac-Moody algebras? One could also take the Mellin transform for a character or symmetric function associated to a Weyl orbit on the root lattice, as in equation \eqref{mellin2}. What kinds of $L$-series result, and what are their convergence properties?

The Apollonian group is an important example of a ``thin group,'' with orbits that are dense but of infinite covolume in the ambient space \cite{MR3020826}. An important problem in harmonic analysis is to understand how much of the theory of automorphic forms extends to this context. One could ask which Cartan matrices yield Weyl groups that are thin in their orthogonal groups. This condition is in tension with Gritsenko and Nikulin's finite volume condition. The answer would yield an interesting new class of thin reflection groups, and a new class of Kac-Moody algebras beyond the Lorentzian ones.

On the packing side, there are many possible generalizations to consider: higher dimensional packings like the sphere packings of Boyd \cite{MR350626} and Maxwell \cite{MR679972}, Apollonian superpackings \cite{MR2183489}, the octahedral packing of Guettler and Mallows \cite{MR2675919}, and more. Kontorovich and Nakamura introduce a classification of crystallographic sphere packings in all dimensions \cite{MR3904690}. They give notions of integrality and superintegrality for general crystallographic sphere packings, and show that the latter yields a finite classification. Stange introduces a collection of packings associated to imaginary quadratic fields and Bianchi groups \cite{MR3814328}. In all these cases, the basic unit of the packing is a tuple of circles whose curvatures satisfy one or more quadratic forms. The group of symmetries is generated by reflections which preserve the forms. One could ask which generalizations of Apollonian packings are related to a Kac-Moody root system, and which root systems arise this way. Do any especially interesting root systems, e.g. Lorentzian ones, appear? A generating function like $Z(\mathbf{s})$ can be associated to any of these packings; how does the geometry of its domain of convergence relate to the geometry of the packing itself? 

\subsection{Acknowledgements}

The author thanks Alex Kontorovich, Kate Stange, Holley Friedlander, Cathy Hsu, Anna Pusk\'{a}s, Dinakar Muthiah, and Li Fan for helpful conversations that shaped this project. This work is dedicated, with gratitude, to Joel Carr, David Gomprecht, and Michael Sturm. 

\section{Expansion of $Z(\mathbf{s})$ in Theta Functions} \label{theta}

The Kac-Moody root system $\Phi$ has a principal rank 2 root subsystem $A_1^{(1)}$ with the Cartan matrix $\left(\begin{array}{cc} 2 & -2 \\ -2 & 2 \end{array}\right)$. The Weyl group of this root subsystem is an infinite dihedral group. Sums over Weyl orbits are theta functions--this is equivalent to the fact that the set of circles tangent to two fixed circles in a packing have curvatures parametrized by a quadratic polynomial. Theta functions satisfy a group $\mathrm{GL}(2, \Z)$ of symmetries, in which the Weyl group elements act as upper-triangular matrices. Their appearance here is preliminary evidence that $Z(\mathbf{s})$ may have automorphic properties beyond its Apollonian group of symmetries. In this section we will briefly explain the connection between $Z(\mathbf{s})$ and theta functions.

Let $W_2$ denote the subgroup $\langle \sigma_3, \sigma_4 \rangle \subset W$, which is the Weyl group of an $A_1^{(1)}$ root subsystem. Fix a pair of tangent circles in $\Pa$, assuming without loss of generality that their curvatures are $c_1, c_2$ in a Descartes quadruple $ \mathbf{c}=c_1 \alpha_1+c_2\alpha_2+c_3\alpha_3+c_4\alpha_4$. The set of all Descartes quadruples including these two circles is an orbit of $W_2$ in $\Pa$. Define
\begin{equation}
Z_2^+(\mathbf{s})=\sum_{w \in W_2} e^{-(w\mathbf{c}, \mathbf{s})}, \qquad Z_2^-(\mathbf{s})=\sum_{w \in W_2} (-1)^{\ell(w)} e^{-(w\mathbf{c}, \mathbf{s})}
\end{equation}
where $\ell(w)$ denotes the length of a reduced word for $w$. The following proposition relates these to Jacobi theta functions, which we denote as
\begin{equation}
\theta_{00}(z, \tau)=\sum_{n \in \Z} e^{2\pi i n z +\pi i n^2 \tau}, \qquad \theta_{01}(z, \tau)=\sum_{n \in \Z} (-1)^n e^{2\pi i n z +\pi i n^2 \tau}
\end{equation}

\begin{prop}\label{prop:theta}
We have:
\begin{equation}
\begin{split}
& Z_2^{\pm}(\mathbf{s}) = e^{-(\mathbf{c}, \mathbf{s})}  \left(\frac{\theta_{00}+\theta_{01}}{2}\right) \left( \frac{(c_1+c_2+c_3-c_4)s_3-(c_1+c_2-c_3+c_4)s_4}{2 \pi i}, \frac{ -(c_1+c_2)(s_3+s_4)}{\pi i} \right) \\
&  \pm e^{-(\mathbf{c}, \mathbf{s})+(c_3-c_4)(s_3-s_4)}\left(\frac{\theta_{00}-\theta_{01}}{2}\right) \left( \frac{-(c_1+c_2-c_3+c_4)s_3+(c_1+c_2+c_3-c_4)s_4}{2 \pi i}, \frac{ -(c_1+c_2)(s_3+s_4)}{\pi i} \right) 
\end{split}
\end{equation}
\end{prop}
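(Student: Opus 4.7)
The plan is to use that $W_2 = \langle \sigma_3, \sigma_4 \rangle$ is the infinite dihedral Weyl group of the affine subsystem $A_1^{(1)}$, for which Weyl orbit sums on the weight lattice are Jacobi theta functions (this is essentially the Macdonald--Kac denominator identity for $A_1^{(1)}$, specialized to our choice of weights). Set $t = \sigma_3 \sigma_4$. Every $w \in W_2$ is uniquely $t^n$, of even length $2|n|$, or $t^n \sigma_3$, of odd length $|2n-1|$, so $(-1)^{\ell(w)}$ is $+1$ on the first coset and $-1$ on the second, and
\[
Z_2^{\pm}(\mathbf{s}) \;=\; \sum_{n \in \Z} e^{-(t^n \mathbf{c},\, \mathbf{s})} \;\pm\; \sum_{n \in \Z} e^{-(t^n \sigma_3 \mathbf{c},\, \mathbf{s})}.
\]
The problem reduces to evaluating each coset sum.

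Using $\sigma_i \alpha_j = \alpha_j - (\alpha_j, \alpha_i)\alpha_i$ with the Cartan entries $\pm 2$, a direct calculation shows that $t$ fixes $c_1, c_2$ and acts on $(c_3, c_4)$ as the affine transformation with linear part $\bigl(\begin{smallmatrix} 3 & -2 \\ 2 & -1 \end{smallmatrix}\bigr)$ (a single Jordan block at eigenvalue $1$, as expected for translation by the null root $\delta = \alpha_3 + \alpha_4$) and translation vector $(6p,\, 2p)$, where $p := c_1 + c_2$. Iteration yields $c_3^{(n)}, c_4^{(n)}$ as explicit quadratic polynomials in $n$, from which
\[
(t^n \mathbf{c},\, \mathbf{s}) \;=\; (\mathbf{c},\, \mathbf{s}) \;+\; 2n\bigl[(c_3 - c_4)(s_3 + s_4) + p(s_3 - s_4)\bigr] \;+\; 4 n^2 \, p (s_3 + s_4).
\]
Factoring out $e^{-(\mathbf{c},\, \mathbf{s})}$, the first coset sum becomes $\sum_n q^{n^2} y^n$ for explicit $q, y$. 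Matched against $(\theta_{00} + \theta_{01})/2 = \sum_{k \in \Z} e^{4 \pi i k z + 4 \pi i k^2 \tau}$, the even-index reindexing $n \mapsto 2k$ absorbs the factor of $4$ in the $n^2$ coefficient into the modulus $\tau = -p(s_3 + s_4)/(\pi i)$ and produces exactly the first term in the proposition. Running the identical computation with initial data $\sigma_3 \mathbf{c}$ in place of $\mathbf{c}$ gives the odd-index theta $(\theta_{00} - \theta_{01})/2$ at the reflected first argument, with the residual $n$-independent terms collecting into the prefactor $e^{-(\mathbf{c},\, \mathbf{s}) + (c_3 - c_4)(s_3 - s_4)}$. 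Adding and subtracting the two coset contributions gives the stated formula for $Z_2^{\pm}$.

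The argument is essentially bookkeeping: no new technique is needed beyond the standard recognition of affine Weyl orbit sums as theta functions. The main obstacle is tracking the constant-in-$n$ terms of the $t^n \sigma_3$ sum carefully, so that they combine with the half-integer shift implicit in $(\theta_{00} - \theta_{01})/2$ to yield precisely the prefactor $e^{-(\mathbf{c},\, \mathbf{s}) + (c_3 - c_4)(s_3 - s_4)}$ rather than the naive $e^{-(\sigma_3 \mathbf{c},\, \mathbf{s})}$ plus an adjustment. This is a finite calculation, but the redistribution between the prefactor and the first argument of the theta function has to be done with care to match the symmetric form in the proposition.
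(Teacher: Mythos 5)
Your proposal is correct and follows essentially the same route as the paper: the paper likewise parametrizes the $W_2$-orbit of $\mathbf{c}$ by an integer $n$ (even $n$ for the translation subgroup $\langle\sigma_3\sigma_4\rangle$, odd $n$ for its coset), obtains quadratic polynomials in $n$ for the curvatures, and identifies the resulting quadratic exponential sums with $(\theta_{00}\pm\theta_{01})/2$, including the same extra prefactor $e^{(c_3-c_4)(s_3-s_4)}$ on the odd coset. The only quibble is your length formula for the coset $t^n\sigma_3$ (it is $|2n+1|$, not $|2n-1|$), but since only the parity of $\ell(w)$ enters, this does not affect the argument.
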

\begin{proof}
The set of Descartes quadruples in $\Pa$ containing $c_1$, $c_2$ may be parametrized as follows:
\begin{equation}
\begin{split}
& \{ \mathbf{c} + n((n-1)c_1+(n-1)c_2- c_3+ c_4)\alpha_3 + n((n+1)c_1+(n+1)c_2- c_3+ c_4)\alpha_4 \, | \, n \in \Z \text{ even} \}  \\
& \cup \{ \mathbf{c} +  (n+1)(n c_1+ n c_2- c_3+ c_4) \alpha_3 + (n-1)(n c_1+ n c_2-c_3+ c_4)\alpha_4 \, | \, n \in \Z \text{ odd} \}
\end{split}
\end{equation}
where the first subset arises from applying words of even length in $\sigma_3, \sigma_4$ to $(c_1, c_2, c_3, c_4)$, and the second arises from applying words of odd length. We can then write
\begin{equation}
\begin{split}
&Z_2^{\pm}(\mathbf{s}) = e^{-(\mathbf{c}, \mathbf{s})} \left( \sum_{n \in \Z \text{ even}} e^{-n^2(c_1+c_2)(s_3+s_4)+n(c_1+c_2+c_3-c_4)s_3-n(c_1+c_2-c_3+c_4)s_4}\right. \\
& \pm \left. \sum_{n \in \Z \text{ odd}} e^{-n^2(c_1+c_2)(s_3+s_4)-n(c_1+c_2-c_3+c_4)s_3+n(c_1+c_2+c_3-c_4)s_4+(c_3-c_4)(s_3-s_4)}\right)
\end{split}
\end{equation}
which is equivalent to the desired formula. 
\end{proof}

It follows from this proposition that we may write:
\begin{equation}
\begin{split}
Z(\mathbf{s}) = \sum_{c_1, c_2} & e^{-(\mathbf{c}, -\mathbf{s})}\left(\tfrac{\theta_{00}+\theta_{01}}{2}\right)\left(\tfrac{(c_1+c_2)(s_3-s_4)+(c_3-c_4)(s_3+s_4)}{2\pi i}, \tfrac{-(c_1+c_2)(s_3+s_4)}{\pi i}\right)   \\
& +e^{-(\mathbf{c}, \mathbf{s})+(c_3-c_4)(s_3-s_4)} \left(\tfrac{\theta_{00}-\theta_{01}}{2}\right) \left(\tfrac{(c_1+c_2)(s_4-s_3)+(c_3-c_4)(s_3+s_4)}{2\pi i}, \tfrac{-(c_1+c_2)(s_3+s_4)}{\pi i}\right)\,
\end{split}
\end{equation}
where the sum is over pairs $c_1, c_2$ such that some Descartes quadruple $\mathbf{c}=c_1\alpha_1+c_2\alpha_2+c_3\alpha_3+c_4\alpha_4$ appears in $\Pa$. It does not matter which quadruple $\mathbf{c}$ we choose to associate to $c_1, c_2$.

In the special case $c_3=c_4$, which can occur if the packing $\Pa$ has a line of symmetry, $Z_2^+(\mathbf{s})$ and $Z_2^-(\mathbf{s})$ behave especially nicely. In this case, $Z_2^-(\mathbf{s})$ is essentially the Weyl denominator for $A_1^{(1)}$ rather than a general alternating sum over the Weyl group. We have 
\begin{equation}
\begin{split}
 &Z_2^+(\mathbf{s}) = e^{-(\mathbf{c}, \mathbf{s})} \theta_{00} \left( \frac{(c_1+c_2)(s_3-s_4)}{2 \pi i}, \frac{ -(c_1+c_2)(s_3+s_4)}{\pi i} \right), \\
& Z_2^-(\mathbf{s}) = e^{-(\mathbf{c}, \mathbf{s})} \theta_{01} \left( \frac{(c_1+c_2)(s_3-s_4)}{2 \pi i}, \frac{ -(c_1+c_2)(s_3+s_4)}{\pi i} \right) 
\end{split}
\end{equation}
The series $Z_2^+(\mathbf{s})$ and $Z_2^-(\mathbf{s})$ admit Jacobi triple product expressions and satisfy simpler transformation laws with respect to $\mathrm{GL}(2, \Z)$. A related simplification occurs with specialized values of $\mathbf{s}$, such as those appearing in the integral transform of equation \eqref{mellin}.

\section{Relation to the Siegel Modular form $\Delta_5$}\label{delta}

The Kac-Moody root system $\Phi$ has a principal rank 3 root subsystem $H_{71}^{(3)}$ with the Cartan matrix
\begin{equation}
\left(\begin{array}{ccc} 2 & -2 & -2 \\ -2 & 2 & -2 \\ -2 & -2 & 2 \end{array}\right)
\end{equation}
This root system is hyperbolic and Lorentzian. Indeed, it is one of the original examples of a Lorentzian root system, studied by Gritsenko and Nikulin in \cite{MR1428063}. They furnish an automorphic correction of this root system whose Weyl denominator is the Siegel automorphic form $\Delta_5$ on $\mathrm{Sp}(4)$. In this section we will outline the relationship between $Z(\mathbf{s})$ and $\Delta_5$. 

An orbit of the Weyl group of $H_{71}^{(3)}$ in $\Pa$ is simply the collection of Descartes quadruples including a fixed circle. This collection plays an important role in the literature on Apollonian packings. After a change of variables, the Weyl group is isomorphic to the congruence subgroup $\Gamma_0(2)$ of $\mathrm{GL}(2, \Z)$; its action on Descartes quadruples is isomorphic to the action of $\Gamma_0(2)$ on binary quadratic forms. As a consequence, one can show that the curvatures of circles tangent to a fixed circle in a packing $\Pa$ are precisely the values taken by a shifted binary quadratic form. This has been a crucial tool for proving density results on the family of curvatures--see \cite{MR2813334}. 

From this change of variables, we can see directly that a sum over the Weyl group of $H_{71}^{(3)}$ has $\mathrm{GL}(2, \Z)$ symmetries. The surprising fact is that such a sum may possess a larger group $\mathrm{Sp}(4, \Z)$ of symmetries, in which $\mathrm{GL}(2, \Z)$ is the subgroup of block diagonal matrices. This fact has not been used in the literature. Because of the technical details of automorphic correction, we will encounter some obstacles in applying this $\mathrm{Sp}(4)$ automorphicity to density and counting problems in packings. But new results may be possible, especially if we broaden to consider other Lorentzian root subsystems of $\Phi$. 

Let $W_3$ denote the subgroup $\langle \sigma_2, \sigma_3, \sigma_4 \rangle \subset W$. which is the Weyl group of an $H_{71}^{(3)}$ root subsystem. Fix a Descartes quadruple $\mathbf{c}=c_1\alpha_1+c_2\alpha_2+c_3\alpha_3+c_4\alpha_4 \in \Pa$. We assume from the start that $c_2=c_3=c_4$ and, by rescaling if necessary, that $2c_1+2c_2=1$. This ensures that a sum over the Weyl orbit of $\mathbf{c}$ behaves like the Weyl denominator for $H_{71}^{(3)}$. These assumptions cannot be satisfied in an integral plane packing, but they can with the non-integral packing with $D_3$ symmetry whose base quadruple is $-\frac{\sqrt{3}}{4}\alpha_1+\frac{1}{4} \left(2+\sqrt{3}\right)\alpha_2+\frac{1}{4} \left(2+\sqrt{3}\right)\alpha_3+\frac{1}{4} \left(2+\sqrt{3}\right)\alpha_4$. They can also be satisfied in the integral spherical and hyperbolic packings studied in \cite{MR2357448}.

As in Section \ref{theta}, define
\begin{equation}
Z_3^+(\mathbf{s})=\sum_{w \in W_3} e^{-(w\mathbf{c}, \mathbf{s})}, \qquad Z_3^-(\mathbf{s})=\sum_{w \in W_3} (-1)^{\ell(w)} e^{-(w\mathbf{c}, \mathbf{s})}
\end{equation}
Our goal is to relate $Z_3^-(\mathbf{s})$ to the Siegel modular form $\Delta_5$.

Let us fix some notation. The group $\mathrm{Sp}(4, \Z)$ consists of integral $4 \times 4$ matrices $M=\begin{pmatrix} A & B \\ C & D \end{pmatrix}$ such that $\,^t M \begin{pmatrix} 0 & I \\ -I & 0 \end{pmatrix} M =  \begin{pmatrix} 0 & I \\ -I & 0 \end{pmatrix}$. Here $A$, $B$, $C$, $D$, $0$ and $I$ denote $2 \times 2$ block matrices. The Siegel upper half plane $\mathbb{H}_2$ is the set of symmetric $2 \times 2$ complex matrices $Z=X+iY$ such that the imaginary part $Y$ is a positive-definite matrix. $\mathrm{Sp}(4, \Z)$ acts on $\mathbb{H}_2$ via 
\begin{equation}
\begin{pmatrix} A & B \\ C & D \end{pmatrix} Z = (AZ+B)(CZ+D)^{-1}
\end{equation} 
A Siegel modular form $f$ of weight $k \in \Z$ and character $\nu:\mathrm{Sp}(4, \Z) \to \C^{\times}$ is a holomorphic function on $\mathbb{H}_2$ satisfying 
\begin{equation}
f (MZ) = \nu(M) \det (CZ+D)^k f(Z)
\end{equation} 
for all $M\in \mathrm{Sp}(4, \Z)$.

The function $\Delta_5: \mathbb{H}_2 \to \C$ is a Siegel cusp form of weight $5$ with a nontrivial quadratic character $\nu$. For full details on the construction of $\Delta_5$, we refer the reader to \cite{MR1428063}. Here we will work with the Fourier expansion of $\Delta_5$:
\begin{equation}
\frac{1}{64} \Delta_5(Z) = \sum_{\substack{l, m, n \text{ odd} \\ m, n, 4mn-l^2>0}} \sum_{d | \gcd(l, m, n)} d^4 g\left(\frac{mn}{d^2}. \frac{l}{d}\right) e^{\pi i(nz_1 + l z_2 + mz_3)}
\end{equation}
where $Z=\begin{pmatrix} z_1 & z_2 \\ z_2 & z_3 \end{pmatrix} \in \mathbb{H}_2$ \cite[Equation 4.10]{MR1428063}. The coefficients $g(k, l)$ are defined by the generating series 
\begin{equation}
\sum_{k, l \text{ odd}} g(k, l) e^{\pi i(kz_1 + l z_2)}  = \eta(z_1)^9\theta_{11}(z_2, z_1) 
\end{equation}
where $\eta(z)=e^{\pi i z/12}\prod\limits_{n=1}^{\infty} (1-e^{2 \pi i n z})$ and $\theta_{11}(z, \tau)= \sum\limits_{n \in \Z} (-1)^n e^{\pi i (2n+1)z+\pi i (n+1/2)^2 \tau}$. Then the Jacobi triple product formula yields:
\begin{equation}
\sum_{k, l \text{ odd}} g(k, l) e^{\pi i(kz_1 + l z_2)}  = -e^{\pi i (z_1-z_2)}\prod_{n=1}^{\infty}(1-e^{2 \pi i((n-1)z_1+z_2)})(1-e^{2 \pi i(nz_1-z_2)})(1-e^{2 \pi i nz_1})^{10}
\end{equation}

In order to relate the action of $\mathrm{Sp}(4, \Z)$ on $\mathbb{H}_2$ to the action of the Apollonian group, we introduce a new basis of $\R^4$ and change variables. Let $\beta_1 = (\alpha_3+\alpha_4)/2$, $\beta_2=\alpha_4$, $\beta_3=(\alpha_2+\alpha_4)/2$.  Then $\mathbf{s}=s_1\omega_1+s_2\omega_2+s_3\omega_3+s_4\omega_4$ can be rewritten as $z_0 \omega_1+ z_1 \beta_1 + z_2 \beta_2+ z_3 \beta_3$ for some $z_0, z_1, z_2, z_3 \in \C$. This is essentially the same change of variables used to relate the action of the Apollonian group to the action of $\Gamma_0(2)$ on binary quadratic forms. We also let $\rho=\tfrac{1}{2}\alpha_2+\tfrac{1}{2}\alpha_3 + \tfrac{1}{2} \alpha_4$, the Weyl vector for the $H_{71}^{(3)}$ root subsystem.

\begin{theorem}\label{theorem:Delta}
For $\mathbf{s}=s_1\omega_1+s_2\omega_2+s_3\omega_3+s_4\omega_4=z_0 \omega_1+ z_1 \beta_1 + z_2 \beta_2+ z_3 \beta_3$, we have
\begin{equation}\label{Deltaeq}
e^{(\mathbf{c}, \mathbf{s})} \left( Z_3^-(\mathbf{s}) -\sum_\alpha m(\alpha) \sum_{w \in W_3} (-1)^{\ell(w)} e^{-(w(\mathbf{c}+\alpha), \mathbf{s})} \right) = \frac{e^{(\rho, \mathbf{s})}}{64} \Delta_5\left( \frac{1}{\pi i}Z\right)
\end{equation}
where the first sum is over $\alpha \in \Z_{\scriptscriptstyle{\geq 0}} \alpha_2 \oplus \Z_{\scriptscriptstyle{\geq 0}} \alpha_3 \oplus \Z_{\scriptscriptstyle{\geq 0}} \alpha_4$ such that $(\alpha, \alpha_i) \leq 0$ for $i=2, 3, 4$, and the $m(\alpha)$ are integer constants. Further, 
\begin{equation}\label{Deltaeq2}
\frac{e^{(\rho, \mathbf{s})}\Delta_5\left( \tfrac{1}{\pi i}Z\right)}{64 e^{(\mathbf{c}, \mathbf{s})} Z_3^-(\mathbf{s})}
\end{equation}
is a series of exponentials of the form $e^{-(\beta, \mathbf{s})}$ where each $\beta$ is a nonnegative integer combination of $\alpha_2$, $\alpha_3$, $\alpha_4$ satisfying $(\beta, \beta) \leq 0$. 
\end{theorem}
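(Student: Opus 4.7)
Under the hypotheses $c_2=c_3=c_4$ and $2c_1+2c_2=1$, a direct calculation gives $(\mathbf{c},\alpha_i)=-2c_1-2c_2=-1=(\rho,\alpha_i)$ for $i=2,3,4$, so $\mathbf{c}-\rho$ is $W_3$-fixed. Hence $w\mathbf{c}-\mathbf{c}=w\rho-\rho$ for all $w\in W_3$, yielding $e^{(\mathbf{c},\mathbf{s})}Z_3^-(\mathbf{s})=e^{(\rho,\mathbf{s})}\sum_{w\in W_3}(-1)^{\ell(w)}e^{-(w\rho,\mathbf{s})}$. The same identity applied to $\mathbf{c}+\alpha$ in place of $\mathbf{c}$ converts the LHS of \eqref{Deltaeq} into
$$e^{(\rho,\mathbf{s})}\sum_{w\in W_3}(-1)^{\ell(w)}e^{-(w\rho,\mathbf{s})}\Bigl(1-\sum_\alpha m(\alpha)e^{-(w\alpha,\mathbf{s})}\Bigr),$$
which is precisely the sum side of the Weyl-Kac denominator identity for a generalized (Borcherds-type) Kac-Moody algebra obtained from $H_{71}^{(3)}$ by adjoining simple imaginary roots $\alpha$ with multiplicities $m(\alpha)$ in the antidominant cone $\{\alpha\in\Z_{\geq 0}\alpha_2\oplus\Z_{\geq 0}\alpha_3\oplus\Z_{\geq 0}\alpha_4:(\alpha,\alpha_i)\leq 0\}$.

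\textbf{Step 2: Gritsenko-Nikulin identification.} Invoke the main theorem of \cite{MR1428063}: Gritsenko and Nikulin construct exactly such a Borcherds-type automorphic correction of $H_{71}^{(3)}$, with integer multiplicities $m(\alpha)$ for its simple imaginary roots, whose Weyl denominator function equals $\tfrac{1}{64}\Delta_5$ in Siegel coordinates. The change of basis $(\beta_1,\beta_2,\beta_3)$ is the one used by Gritsenko-Nikulin to identify the Cartan subalgebra with the tangent space of $\mathbb{H}_2$; the substitution $\mathbf{s}=z_0\omega_1+z_1\beta_1+z_2\beta_2+z_3\beta_3$ pairs our formal exponentials with theirs. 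Since $(\omega_1,\alpha_i)=0$ for $i=2,3,4$, the variable $z_0$ drops out of every inner product $(\lambda,\mathbf{s})$ for $\lambda$ in the $H_{71}^{(3)}$ root lattice, cleanly decoupling the $\alpha_1$-direction. Evaluating the Fourier expansion in \cite[Eq.\ 4.10]{MR1428063} at $Z/(\pi i)$ converts $e^{\pi i(nz_1+lz_2+mz_3)}$ to $e^{nz_1+lz_2+mz_3}=e^{-(\lambda,\mathbf{s})}$ for the lattice vector $\lambda$ determined by $(n,l,m)$; matching term by term with Step 1 yields \eqref{Deltaeq}.

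\textbf{Step 3: Product side for \eqref{Deltaeq2}.} Use the product formulation of Gritsenko-Nikulin,
$$\tfrac{1}{64}\Delta_5(Z/(\pi i))=e^{-(\rho,\mathbf{s})}\prod_{\alpha>0}(1-e^{-(\alpha,\mathbf{s})})^{\mathrm{mult}(\alpha)},$$
where the product runs over positive roots of the corrected algebra. Combined with the classical Weyl-Kac denominator for $H_{71}^{(3)}$, which via Step 1 gives $e^{(\mathbf{c},\mathbf{s})}Z_3^-(\mathbf{s})=\prod_{\alpha\in\Delta^+(H_{71}^{(3)})}(1-e^{-(\alpha,\mathbf{s})})^{\mathrm{mult}(\alpha)}$, the real-root factors cancel in the quotient \eqref{Deltaeq2}, leaving a product over positive imaginary roots of the corrected algebra with integer exponents $n(\alpha)$ accounting for multiplicity discrepancies. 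Expanding each factor as a geometric series produces a sum of exponentials $e^{-(\beta,\mathbf{s})}$ with $\beta$ a nonnegative integer combination of positive imaginary roots; such $\beta$ lie in $\Z_{\geq 0}\alpha_2\oplus\Z_{\geq 0}\alpha_3\oplus\Z_{\geq 0}\alpha_4$ and satisfy $(\beta,\beta)\leq 0$ by convexity of the imaginary cone under nonnegative combinations.

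\textbf{Main obstacle.} The principal difficulty is careful bookkeeping: translating between Gritsenko-Nikulin's Siegel conventions (normalization $\tfrac{1}{64}$, quadratic character $\nu$, explicit Fourier-Jacobi expansion involving $\eta(z_1)^9\theta_{11}(z_2,z_1)$) and the Apollonian root-system conventions used in the paper (Descartes form with off-diagonal $-2$, non-standard Weyl-vector normalization $(\rho,\alpha_i)=-1$, and fundamental weights for the full rank-4 system $\Phi$). Verifying that the basis $(\omega_1,\beta_1,\beta_2,\beta_3)$ aligns every inner product $(\lambda,\mathbf{s})$ with the corresponding Fourier exponent in $\Delta_5$, and identifying the integer multiplicities $m(\alpha)$ with the coefficients appearing in the explicit Fourier-Jacobi expansion of $\Delta_5$, is the main computational crux of the proof.
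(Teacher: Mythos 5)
Your outline is essentially sound, and Step 1 coincides exactly with how the paper begins: the normalization $c_2=c_3=c_4$, $2c_1+2c_2=1$ forces $(\mathbf{c},\alpha_i)=(\rho,\alpha_i)=-1$, so $\mathbf{c}-w(\mathbf{c}+\alpha)=\rho-w(\rho+\alpha)$ and the left side of \eqref{Deltaeq} becomes a $\rho$-shifted alternating sum supported on exponents $nz_1+lz_2+mz_3$ with $n,l,m$ odd, $n,m>0$, $4mn-l^2>0$ --- matching the support of the Fourier expansion of $\Delta_5$. Where you diverge is in what you treat as citable versus what must be proved. The paper does \emph{not} simply invoke Gritsenko--Nikulin's theorem; it calls \eqref{Deltaeq} only an ``analog'' of their Theorem 2.3 and carries out the decisive verification by hand: that the Fourier coefficients $\sum_{d\mid\gcd(l,m,n)}d^4g(mn/d^2,l/d)$ are anti-invariant under the explicit $W_3$-action on $(n,l,m)$, which reduces to the identity $g(k-2lm+4m^2,4m-l)=-g(k,l)$ read off from the series for $\theta_{11}(z_2,z_1)$ (and similarly $g(k,-l)=-g(k,l)$ for $\sigma_4$). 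The constants $m(\alpha)$ are then \emph{defined} as the negatives of the coefficients at the antidominant representatives of each orbit. This is precisely the ``bookkeeping'' you flag as the crux and then defer; it is the actual content of the proof of \eqref{Deltaeq}, so your Step 2 as written is a legitimate strategy but leaves the main work undone.

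For the second statement your route is genuinely different. You pass to the Borcherds product for $\Delta_5$ and the Weyl--Kac denominator product for $H^{(3)}_{71}$, cancel real-root factors, and conclude from convexity of the forward light cone. That works (and is consistent with the paper's remark that the two sides ``have the same real roots''), but it requires importing the denominator identity and the product expansion. The paper instead argues entirely on the sum side: it writes \eqref{Deltaeq2} via \eqref{Deltaeq} as a combination of quotients
\begin{equation*}
\frac{\sum_{w\in W_3}(-1)^{\ell(w)}e^{(\mathbf{c}-w(\mathbf{c}+\alpha),\mathbf{s})}}{\sum_{w\in W_3}(-1)^{\ell(w)}e^{(\mathbf{c}-w(\mathbf{c}),\mathbf{s})}},
\end{equation*}
observes the denominator is a unit in the ring of exponential series in $e^{-(\alpha_i,\mathbf{s})}$, and then uses $W_3$-invariance of the quotient plus a height-descent: if some exponent $\beta$ had $(\beta,\beta)>0$, reflecting repeatedly would eventually produce a translate with a negative coefficient, contradicting the fact that the whole $W_3$-orbit of $\beta$ must appear with nonnegative coefficients. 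Your convexity argument and the paper's descent argument both establish $(\beta,\beta)\le 0$; the paper's has the advantage of needing no product formulas or knowledge of root multiplicities, while yours makes the ``imaginary roots only'' interpretation more transparent.
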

The first statement is the analog of Theorem 2.3 in \cite{MR1428063}. The meaning of the second statement comes from comparing the product forms of the Weyl denominator for $H^{(3)}_{71}$, $e^{(\mathbf{c}, \mathbf{s})} Z_3^-(\mathbf{s})$, to the Weyl denominator of its automorphic correction, $\tfrac{e^{(\rho, \mathbf{s})}}{64} \Delta_5\left( \tfrac{1}{\pi i}Z\right)$. The original root system and the automorphic correction have the same real roots, and differ only by imaginary roots. 
\begin{proof}
Note that $(\mathbf{c}, \alpha_i)=(\rho, \alpha_i)=-1$ for $i=2, 3, 4$ because $c_2=c_3=c_4$ and $2c_1+2c_2=1$. Therefore, for $\alpha$ as in the proposition, the bilinear pairing of $\mathbf{c}+\alpha$ with any positive root in the root subsystem will be a nonpositive integer. That is, $\mathbf{c}+\alpha$ behaves like an antidominant weight. Further, for $w \in W_3$, we have $\mathbf{c}-w(\mathbf{c}+\alpha) = \rho-w(\rho+\alpha)$, and this is a nonpositive integer combination of $\alpha_2, \alpha_3, \alpha_4$. Applying the bilinear form, we see that $(\mathbf{c}, \mathbf{s})-(w(\mathbf{c}+\alpha), \mathbf{s})=(\rho, \mathbf{s})-(w(\rho+\alpha), \mathbf{s})$ is an even integer combination of $z_1, z_2, z_3$, with nonnegative coefficients of $z_1$ and $z_3$. Since $(\rho, \mathbf{s})=-z_1-z_2-z_3$, we can write:
\begin{equation}
-(w(\rho+\alpha), \mathbf{s}) = nz_1 + l z_2 + mz_3
\end{equation}
With $n, l, m$ odd, and $n, m>0$. The condition that $4mn-l^2>0$ is equivalent to $(w(\rho+\alpha), w(\rho+\alpha))<0$, which holds because $\rho+\alpha$ is positive and antidominant and $w$ preserves the bilinear from. From this calculation and the Fourier expansion of $\Delta_5$, we see that the two sides of \eqref{Deltaeq} are exponential sums with the same support, and it suffices to compare the coefficients. 

By the Fourier expansion, the coefficients on the right side of \eqref{Deltaeq} are integers. Moreover, the constant coefficient, which corresponds to $l=m=n=1$, is $1$. We must show that the Fourier coefficients on the right side are alternating with respect to the action of $W_3$. This action is generated as follows: if $-(w(\rho+\alpha), \mathbf{s}) = nz_1 + l z_2 + mz_3$, then
\begin{equation}
\begin{split}
&-(\sigma_2w(\rho+\alpha), \mathbf{s}) = (n-2l+4m)z_1 + (4m-l) z_2 + mz_3 \\
&-(\sigma_3w(\rho+\alpha), \mathbf{s}) = nz_1 + (4n-l) z_2 + (m-2l+4n)z_3 \\
&-(\sigma_4w(\rho+\alpha), \mathbf{s}) = nz_1 - l z_2 + mz_3
\end{split}
\end{equation}
To show alternation for $\sigma_2$, we must check that
\begin{equation} 
\sum_{d | \gcd(4m-l, m, n-2l+4m)} d^4 g\left(\frac{m(n-2l+4m)}{d^2}. \frac{4m-l}{d}\right) = -\sum_{d | \gcd(l, m, n)} d^4 g\left(\frac{mn}{d^2}. \frac{l}{d}\right)
\end{equation}
Since the sum over $d$ is the same on both sides, it suffices to check that $g\left(m(n-2l+4m), 4m-l \right) = -g(mn, l)$, or more generally, that $g(k-2lm+4m^2, 4m-l) = -g(k, l)$ for all $k, l, m \in \Z$. We can see from the definition that this property holds for coefficients of $\theta_{11}(z_2, z_1)$, and therefore it must hold for the coefficients $g(k, l)$ of $\eta(z_1)^9\theta_{11}(z_2, z_1)$. The proof of alternation for $\sigma_3$ is entirely parallel. For $\sigma_4$, it suffices to show that $g(nm, -l)=-g(nm, l)$, which again is apparent from the series definition of $\theta_{11}(z_2, z_1)$. 

Now that we have the expected $W_3$ alternation on both sides of \eqref{Deltaeq}, note that each $W_3$ orbit contains a unique antidominant element, which can be written as $\rho+\alpha$ where $-(\rho+\alpha, \mathbf{s}) = nz_1 + l z_2 + mz_3$, and $\alpha$ is a nonnegative integer combination of $\alpha_2$, $\alpha_3$, $\alpha_4$. In this case, set 
\begin{equation}
m(\alpha)= -\sum\limits_{d | \gcd(l, m, n)} d^4 g\left(\frac{mn}{d^2}. \frac{l}{d}\right)
\end{equation} 
and equation \eqref{Deltaeq} follows.

To justify the final sentence of the theorem, use \eqref{Deltaeq} to express \eqref{Deltaeq2} as a linear combination of terms of the form 
\begin{equation}
\frac{\sum_{w \in W_3} (-1)^{\ell(w)} e^{(\mathbf{c}-w(\mathbf{c}+\alpha), \mathbf{s})}}{\sum_{w \in W_3} (-1)^{\ell(w)} e^{(\mathbf{c}-w(\mathbf{c}), \mathbf{s})}}
\end{equation}
Both the numerator and denominator are series of exponentials $e^{-(\beta, \mathbf{s})}$ where each $\beta$ is a nonnegative integer combination of $\alpha_2$, $\alpha_3$, $\alpha_4$. The denominator is a unit in the ring of such series. Finally, the quotient is $W_3$-invariant, so if it includes a term $e^{-(\beta, \mathbf{s})}$, then it also includes $e^{-(w(\beta), \mathbf{s})}$ for all $w \in W_3$. Thus every element in the $W_3$ orbit of $\beta$ is a nonnegative integer combination of $\alpha_2$, $\alpha_3$, $\alpha_4$. If $(\beta, \beta)>0$, then $(\beta, \alpha_i)>0$ for $i=1$, $2$, or $3$, and we can apply $\sigma_i$ to reduce the height of $\beta$. We can repeat this procedure until we have a negative coefficient of $\alpha_2$, $\alpha_3$, or $\alpha_4$, a contradiction. We conclude that $(\beta, \beta) \leq 0$.
\end{proof}

In the group $\mathrm{Sp}(4, \Z)$ of symmetries for $\Delta_5$, $W_3$ is embedded as the subgroup of block matrices $\begin{pmatrix} A & 0 \\ 0 & \,^t A^{-1} \end{pmatrix}$ with $A \in \Gamma_0(2) \subset \mathrm{GL}(2, \Z)$. The symmetry under $\begin{pmatrix} 0 & I \\ -I & 0 \end{pmatrix} \in \mathrm{Sp}(4, \Z)$ is of the kind needed to complete the Mellin inversion argument sketched in Section \ref{intro}. The difference between $e^{(\mathbf{c}, \mathbf{s})} Z_3^-(\mathbf{s})$ and its automorphic correction $\tfrac{e^{(\rho, \mathbf{s})}}{64} \Delta_5\left( \tfrac{1}{\pi i}Z\right)$ adds some technical complication, but this method can be used to estimate the density of curvatures tangent to a fixed circle in $\Pa$. We do not pursue this here because good estimates for the density of integers represented by a shifted binary quadratic form are already available \cite{MR2267284}. Another promising approach is to consider alternate Lorentzian root subsystems of $\Phi$.  

Throughout this section, we have assumed that $\mathbf{c}$ has a particularly simple form, which causes $Z_3^-(\mathbf{s})$ to behave like the Weyl denominator for the $H_{71}^{(3)}$ root system. A related simplification occurs with specializations of $\mathbf{s}$, like those appearing in \eqref{mellin}. It is not clear whether an arbitrary sum over an orbit of $W_3$ can be automorphically corrected. 

\section{The Apollonian Cone $A$} \label{convergence}

Since several different conelike objects appear below, we introduce some terminology here. We define the cone with apex $\mathbf{p}\in \R^n$ on a region $R \subset \R^n$ as the union of all rays originating at $\mathbf{p}$ and containing a point of $R$. The bounded cone with apex $\mathbf{p}$ on region $R$ is the union of all line segments between $\mathbf{p}$ and a point of $R$. A simplicial cone is the cone on a simplex or finite union of simplices in $\R^n$. 

We begin by asking where $Z(\mathbf{s})$ converges. By standard results in the theory of several complex variables, the domain of absolute convergence of any series of exponentials with real coefficients is a convex tube domain. The real parts of the variables $s_i$ must lie in a convex region in $\R^4$, while the imaginary parts can be arbitrary. The following proposition establishes an initial domain of absolute convergence.

\begin{proposition} \label{prop:firstdomain}
$Z(\mathbf{s})$ is absolutely convergent in the simplicial cone $C_0$ defined by the four inequalities $\Re(s_i)>0$.
\end{proposition}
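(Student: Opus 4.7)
The plan is to dominate $|Z(\mathbf{s})|$ by a single-variable sum over the curvatures in $\Pa^*$ and then close with an elementary area bound. Since $|e^{-\mathbf{c}\cdot\mathbf{s}}|$ depends only on the real parts $a_i := \Re(s_i)$, absolute convergence in the tube over $C_0$ reduces to showing that $\sum_{\mathbf{c} \in \Pa} e^{-\sum_i c_i a_i}$ is finite whenever $a_i > 0$. I would set $a_{\min} = \min_i a_i > 0$ and $a_{\max} = \max_i a_i$.

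The structural input I would use is already stated in the introduction: for every non-base Descartes quadruple $\mathbf{c} \in \Pa$ there is a unique coordinate $M(\mathbf{c}) > \sum_{j \neq i} c_j$, this $M(\mathbf{c})$ is the maximal curvature in the quadruple, and the assignment $\mathbf{c} \mapsto M(\mathbf{c})$ is a bijection between non-base quadruples and those circles of $\Pa^*$ that do not appear in the base quadruple. I would further use the geometric fact specific to a bounded packing: only the outer bounding circle has negative curvature, say $-b$ with $b > 0$, so in any Descartes quadruple at most one coordinate is negative and equal to $-b$, while every other coordinate is nonnegative.

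Given these ingredients, I would first estimate each summand. Splitting off the maximum coordinate from the others and using that the non-maximum coordinates are bounded below by $-b$, we get $\sum_i c_i a_i \geq a_{\min} M(\mathbf{c}) - b\,a_{\max}$, and hence $|e^{-\mathbf{c}\cdot\mathbf{s}}| \leq e^{b\,a_{\max}} e^{-a_{\min} M(\mathbf{c})}$ for every non-base $\mathbf{c}$. Summing and applying the bijection, and absorbing the finitely many base-quadruple terms (with $D_2$-multiplicity if present) into a constant $K$,
$$\sum_{\mathbf{c} \in \Pa} |e^{-\mathbf{c}\cdot\mathbf{s}}| \;\leq\; K + e^{b\,a_{\max}} \sum_{c \in \Pa^*} e^{-a_{\min} c}.$$
To finish I would invoke the elementary area bound: the circles of $\Pa$ are disjoint inside a bounded region of area $A$, so $\sum_{c \in \Pa^*} \pi/c^2 \leq A$. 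In particular $|\{c \in \Pa^* : c < X\}| \leq AX^2/\pi$, and this polynomial growth makes $\sum_{c \in \Pa^*} e^{-a_{\min} c}$ convergent for every $a_{\min} > 0$.

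I expect the only real obstacle to be bookkeeping, not analysis: rigorously confirming the bijection $\mathbf{c} \mapsto M(\mathbf{c})$ (and its correct behavior for $D_2$-symmetric packings, where the base quadruple is counted with multiplicity two) and verifying that the distinguished tree-maximum coordinate is indeed the curvature-maximum even when one coordinate is negative. The latter uses that circles tangent to the outer circle have curvature at least $b$, so the tree-maximum inequality cannot be outvoted by a comparison involving the negative coordinate. Once these combinatorial points are nailed down, the displayed estimates fall out immediately.
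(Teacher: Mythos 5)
Your proof is correct, but it takes a genuinely different route from the paper's. The paper's own argument is a direct domination by a product of geometric series: it shows each ordered curvature quadruple occurs for at most two configurations in $\Pa$ (with equality only for $D_2$-symmetric packings), notes that every quadruple satisfies $c_1 \geq c$ and $c_2, c_3, c_4 \geq 0$, and then bounds $Z(\mathbf{s})$ by $2e^{-cs_1}\prod_i(1-e^{-s_i})^{-1}$ — a comparison that implicitly uses the discreteness (integrality) of the curvature coordinates to inject quadruples into the lattice points indexing that product. You instead collapse the four-variable sum to a one-variable sum via the tree structure (each non-base quadruple is determined by its maximal circle), and then control $\sum_{c\in\Pa^*} e^{-a_{\min}c}$ by the disjoint-disks area bound $\sum_c \pi/c^2 \leq A$, i.e.\ quadratic growth of the circle-counting function. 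Both are complete: your bijection and the claim that the distinguished coordinate with $c_i > \sum_{j\neq i}c_j$ is the curvature maximum are exactly the facts the paper establishes in the introduction when deriving $Z_1(t)+\sum e^{-c_it}=\sum_{c\in\Pa^*}e^{-ct}$, and your check that the negative coordinate cannot spoil the maximality (since every interior circle has curvature at least $b$) is sound. What your version buys is independence from integrality — it applies verbatim to any bounded packing, including the non-integral $D_3$ packing used in Section \ref{delta} — and it makes visible the link between convergence of $Z(\mathbf{s})$ and the counting function $|\{c<X\}| \ll X^2$ (equivalently $\delta \leq 2$) that underlies the $L$-function discussion; what the paper's version buys is brevity and a bound by an explicit elementary closed form, which is convenient for the later convergence statements about the integral transforms \eqref{mellin} and \eqref{mellin2}.
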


\begin{proof} 
Let $c$ denote the negative curvature of the exterior circle in $\Pa$. Assume without loss of generality that the base quadruple is ordered from smallest to largest curvature. Then we have $c_1 \geq c$ and $c_2, \, c_3, \, c_4 \geq 0$ for all quadruples $c_1\alpha_1+c_2\alpha_2+c_3\alpha_3+c_4\alpha_4 \in \Pa$.

Any ordered quadruple appears for at most two Descartes configurations in $\Pa$. Indeed, if the same ordered quadruple of curvatures appears at distinct configurations, then the same sequence of moves in the Apollonian group can be applied to both quadruples to obtain two distinct copies of the base Descartes configuration in $\Pa$. This is possible if and only if the packing has symmetry type $D_2$. In this case there are exactly two copies of the ordered base quadruple in the packing, and thus two copies of any ordered quadruple. In any other case, there is only one copy of each ordered quadruple.

Therefore $Z(\mathbf{s})$ can be compared to the product of four geometric series 
\begin{equation}
\frac{2e^{-cs_1}}{(1-e^{-s_1})(1-e^{-s_2})(1-e^{-s_3})(1-e^{-s_4})}
\end{equation} 
which converges absolutely in this region.
\end{proof}



A similar argument, counting triples of circles in the packing instead of quadruples, can be used to prove the following proposition, which gives a larger domain of convergence.

\begin{proposition}
$Z(\mathbf{s})$ is absolutely convergent in the simplicial cone $C_1$ defined by the 12 inequalities $\Re(s_i) > - 2 \Re(s_j)$. 
\end{proposition}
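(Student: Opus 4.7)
The plan is to mirror the proof of Proposition~\ref{prop:firstdomain}, replacing ordered Descartes quadruples by ordered mutually tangent triples of circles as the basic combinatorial object. For each ordered triple $(c_{j_1}, c_{j_2}, c_{j_3})$ of mutually tangent circles in $\Pa$, the Descartes identity constrains the fourth curvature to one of two values $c_i^{\pm} = S \pm 2R$, where $S = c_{j_1}+c_{j_2}+c_{j_3}$ and $R = \sqrt{c_{j_1}c_{j_2}+c_{j_2}c_{j_3}+c_{j_3}c_{j_1}}$. Using
\begin{equation*}
e^{-c_i^+ s_i}+e^{-c_i^- s_i} = 2e^{-Ss_i}\cosh(2Rs_i),
\end{equation*}
we group $Z(\mathbf{s})$ by triples. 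Given $\mathbf{s} \in C_1$, I would choose the index $i\in\{1,2,3,4\}$ for which $\Re(s_i)$ is the smallest real part. At most one of the $\Re(s_k)$ can be negative in $C_1$: if both $\Re(s_i)$ and $\Re(s_j)$ were negative, the inequality $\Re(s_i)+2\Re(s_j)>0$ would fail.

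The key estimate applies to the ``good'' triples in $\Pa$, those whose two completions $c_i^{\pm}$ are both non-negative (equivalently, neither completion is the exterior circle). For these, $c_i^- \geq 0$ gives $2R \leq S$, hence $|\cosh(2Rs_i)| \leq e^{S|\Re s_i|}$, and the inner factor is bounded by $2e^{S(|\Re s_i|-\Re s_i)}$: equal to $2$ when $\Re(s_i) \geq 0$ and to $2e^{2S|\Re s_i|}$ when $\Re(s_i) < 0$. The counting argument from Proposition~\ref{prop:firstdomain}---each ordered triple of curvatures appears in at most a bounded number of tangent triples in $\Pa$, since each triple of circles corresponds to two completions and each ordered quadruple of curvatures appears at most twice---then dominates the sum over good triples by a product of three geometric series in $c_{j_1}, c_{j_2}, c_{j_3}$ with coefficients $\Re(s_{j_k}) - 2|\Re s_i|$. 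These are positive exactly under the conditions $\Re(s_{j_k}) + 2\Re(s_i) > 0$ for $k=1,2,3$, three of the twelve inequalities of $C_1$; by the choice of $i$ as the argmin, these are precisely the inequalities from $C_1$ in which $s_i$ sits in the ``$2$-position,'' and the remaining nine are used to verify the same condition for the other $i$ should we need to reshuffle.

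The main technical obstacle is the ``bad'' triples, for which $c_i^-$ equals the exterior curvature $c_\text{outer} < 0$ and thus $2R > S$; these form a countable family of triples of circles all tangent to the exterior. Their contribution splits into two pieces. The first, coming from the completion $c_i^- = c_\text{outer}$, factors as $e^{-c_\text{outer}s_i}$ times a sum over triples tangent to the exterior, which converges because $\Re(s_{j_k}) > 0$ for $j_k \neq i$ (forced by the cone condition after our choice of $i$) and the curvatures of exterior-tangent circles grow polynomially in the packing. The second, from $c_i^+ = 2S + |c_\text{outer}|$, rewrites as $e^{-|c_\text{outer}|s_i}$ times a sum whose exponents take the form $c_{j_k}(s_{j_k}+2s_i)$, after redistributing the factor $e^{-2Ss_i}$ among the three indices; these exponents have positive real parts in $C_1$ by the very defining inequalities. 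Combining the good-triple and bad-triple contributions gives absolute convergence throughout $C_1$.
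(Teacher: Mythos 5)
Your argument is correct and follows exactly the route the paper indicates: the paper omits a written proof and simply remarks that ``a similar argument, counting triples of circles in the packing instead of quadruples'' establishes the result, and your grouping of $\Pa$ by tangent triples, the identity $e^{-c_i^+s_i}+e^{-c_i^-s_i}=2e^{-Ss_i}\cosh(2Rs_i)$ with the bound $2R\le S$ for non-exterior completions, and the separate treatment of the finitely-many-families of exterior-tangent triples is a faithful and correct implementation of that sketch. The only cosmetic omission is that a triple may itself contain the exterior circle, but as in Proposition~\ref{prop:firstdomain} that circle has a fixed negative curvature and only contributes a bounded constant factor to the dominating product of geometric series.
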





In fact, the domain of convergence is even larger. The defining property of $Z(\mathbf{s})$ is its invariance under the Apollonian group $W=\langle \sigma_1, \sigma_2, \sigma_3, \sigma_4 \rangle$. Applying $w\in W$ to $\mathbf{s}$ simply permutes the summands of $Z(\mathbf{s})$, preserving absolute convergence. 

Applying $\sigma_i$ to the cone $C_0$ yields a simplicial cone defined by the four inequalities $\Re(s_i)<0$ and $\Re(s_j)>-2\Re(s_i)$ for $j \neq i$. From this we see that $C_1$ is the convex hull of the set $C_0 \cup \sigma_1(C_0) \cup  \sigma_2(C_0)  \cup \sigma_3(C_0) \cup  \sigma_4(C_0)$. In particular, this set includes the faces of $\bar{C_0}$ where exactly one of the $\Re(s_i)$ is $0$, but does not include the 2-skeleton of $\bar{C_0}$. Indeed, if the real parts of two of the $s_i$ are $0$, then $Z(\mathbf{s})$ certainly diverges because there exist infinitely many quadruples in the packing with two circles fixed.

Let $C=\lbrace \mathbf{s} \in \C^4 \, | \, \text{all } \Re(s_i) \geq 0, \text{ at most one } \Re(s_i)=0 \rbrace$. Then we have the following:
\begin{theorem} \label{theorem:A} 
The domain of absolute convergence of $Z(\mathbf{s})$ is $A=\bigcup_{w \in W} \, w(C)$.
\end{theorem}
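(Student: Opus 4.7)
The plan is to prove both containments: $Z(\mathbf{s})$ converges absolutely on $A$, and diverges for every $\mathbf{s} \in \C^4 \setminus A$. The key preliminary observation is that $|e^{-(\mathbf{c},\mathbf{s})}|=e^{-(\mathbf{c},\Re(\mathbf{s}))}$ depends only on $\Re(\mathbf{s})$, and since $W$ preserves $\Pa$ and the bilinear form,
\[
\sum_{\mathbf{c}\in\Pa}|e^{-(\mathbf{c},w\mathbf{s})}|=\sum_{\mathbf{c}\in\Pa}|e^{-(w^{-1}\mathbf{c},\mathbf{s})}|=\sum_{\mathbf{c}\in\Pa}|e^{-(\mathbf{c},\mathbf{s})}|,
\]
so the domain of absolute convergence is $W$-invariant. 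Combined with Proposition \ref{prop:firstdomain}, to prove convergence on $A$ it suffices to extend convergence from the open cone $C_0$ to the four codimension-one faces of $C$ where exactly one $\Re(s_i)=0$. By permutation symmetry I may take $\Re(s_1)=0$ and $\Re(s_j)>0$ for $j\ne1$; the summand magnitudes reduce to $e^{-c_2\Re(s_2)-c_3\Re(s_3)-c_4\Re(s_4)}$. Grouping quadruples of $\Pa$ by their last three coordinates and using that each ordered triple of mutually tangent circles extends to at most two Descartes quadruples (the two completing tangent circles) and each ordered quadruple appears at most twice in $\Pa$, I obtain
\[
\sum_{\mathbf{c}\in\Pa} e^{-c_2\Re(s_2)-c_3\Re(s_3)-c_4\Re(s_4)}\;\le\;4\sum_{\text{ordered tangent triples in }\Pa} e^{-\epsilon(c_2+c_3+c_4)}
\]
with $\epsilon=\min_{j\ne1}\Re(s_j)>0$. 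Since the packing lies in a bounded region and the open disks are disjoint, the number of circles with curvature at most $X$ is $O(X^2)$, so the number of ordered tangent triples with $c_2+c_3+c_4\le N$ grows only polynomially in $N$ while the summand decays exponentially, securing convergence.

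For divergence at $\mathbf{s}\notin A$, I split on the position of $\Re(\mathbf{s})$. In the first subcase, $\Re(\mathbf{s})$ lies on a face of codimension at least two of some Weyl chamber; after conjugating by $W$, I may assume the chamber is $C_0$, so $\Re(\mathbf{s})$ is fixed by at least two simple reflections $\sigma_i,\sigma_j$. Because every off-diagonal Cartan entry of $\Phi$ is $-2$, the subgroup $\langle\sigma_i,\sigma_j\rangle$ is an infinite dihedral group of affine type $A_1^{(1)}$. For any $\mathbf{c}\in\Pa$ not on the codimension-two fixed axis of this subgroup, the orbit $\langle\sigma_i,\sigma_j\rangle\mathbf{c}$ is infinite yet every orbit element pairs with $\Re(\mathbf{s})$ to the same real number; infinitely many summands of $Z(\mathbf{s})$ thus share a common positive magnitude and the series diverges. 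In the second subcase, $\Re(\mathbf{s})$ lies outside the closed Tits cone $\bar{X}=\bigcup_{w\in W}w\bar{C_0}$; I claim there exist $\mathbf{c}_n\in\Pa$ with $(\mathbf{c}_n,\Re(\mathbf{s}))\to-\infty$, whence $|e^{-(\mathbf{c}_n,\mathbf{s})}|\to\infty$ and $Z(\mathbf{s})$ diverges. This rests on a separation property: since $\Pa$ is the $W$-orbit of the antidominant null base quadruple $\mathbf{c}_0$ and the closed convex cone spanned by $\Pa$ coincides (up to the apex) with the closed dual of $\bar{X}$ under the bilinear form, any $\Re(\mathbf{s})$ outside $\bar{X}$ pairs strictly negatively with some element of $\Pa$; Weyl-translating that element then produces pairings of arbitrarily large negative magnitude.

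The main obstacle is this second divergence subcase. The codimension-two argument is essentially automatic from the visible affine $A_1^{(1)}$ sub-Cartan structure of $\Phi$, but producing packing elements with arbitrarily negative pairing against each $\Re(\mathbf{s})$ outside the Tits cone demands a careful convex-geometric comparison between $\Pa$ and $\bar{X}$. This is precisely what the detailed geometric description of $A$ in Theorem \ref{theorem:Ageometry} of Section \ref{geometry} is designed to support: with that description in hand, $\bar{X}$ is identified with $\overline{A}$, and the dual-cone separation argument can be carried out rigorously using only the bilinear form $(\,,\,)$ and the convex structure of $\Pa$, sidestepping any appeal to hyperbolic geometry.
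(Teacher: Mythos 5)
Your convergence half is sound and essentially matches the paper (Proposition \ref{prop:firstdomain} plus the triple-counting extension to the open codimension-one faces of $\bar{C_0}$), and your first divergence subcase --- a translate fixed by two simple reflections, giving an infinite $A_1^{(1)}$-orbit of terms of constant modulus --- is also how the paper handles translates with two or more nonpositive coordinates. The genuine gap is in your second divergence subcase. The claimed separation property is false: the dual of the Tits cone $\bar{X}=\bigcup_{w\in W}w(\bar{C_0})$ under $(\,,\,)$ is $\bigcap_{w\in W}w(K)$, where $K$ is the cone of nonnegative combinations of the $\alpha_i$, and the paper notes explicitly that $\Pa$ is \emph{not} contained in $K$ (the base quadruple has a negative coordinate). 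So the closed convex cone spanned by $\Pa$ is not the dual of $\bar{X}$, and the implication ``$\Re(\mathbf{s})\notin\bar{X}$ implies $(\mathbf{c},\Re(\mathbf{s}))<0$ for some $\mathbf{c}\in\Pa$'' fails. It fails precisely on the hard set $B$ of points all of whose $W$-translates have three positive coordinates and one negative --- for instance on the residual set $F$ of the light cone $N$. There $\mathbf{s}$ and every $\mathbf{c}\in\Pa$ are null vectors in opposite nappes of the cone, so $(\mathbf{c},\mathbf{s})\geq 0$ for every $\mathbf{c}\in\Pa$, all terms of the series are bounded by $1$, and no sequence with $(\mathbf{c}_n,\Re(\mathbf{s}))\to-\infty$ exists; divergence at such points cannot be detected by unbounded terms.

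The paper closes exactly this case with an idea your proposal is missing: for $\mathbf{s}\in B$ there is a canonical height-decreasing sequence of $W$-translates $\mathbf{s},\,\sigma_{i_1}\mathbf{s},\,\sigma_{i_2}\sigma_{i_1}\mathbf{s},\ldots$ whose heights converge, which forces the negative coordinates to tend to $0$ and a subsequence to converge to a point of the $2$-skeleton of $\bar{C_0}$. Since the terms are positive and $Z$ is $W$-invariant, convergence at $\mathbf{s}$ would uniformly bound the partial sums along this sequence and hence at the limit point, contradicting the already-established divergence on the $2$-skeleton. Some such limiting or compactness argument is unavoidable here, and your appeal to Theorem \ref{theorem:Ageometry} does not supply it: that theorem describes $A=\bigcup_{w\in W}w(C)$ as a set but cannot produce elements of $\Pa$ pairing negatively with points of $B$, because none exist.
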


This domain is closely related to the Tits cone,  $\bigcup_{w \in W} w(\bar{C})$. Notice also that this domain is independent of the Apollonian packing $\Pa$ that we have fixed. 

\begin{proof}
We may assume for simplicity that our point of convergence $\mathbf{s}$ lies in $\R^4$, with the understanding that the real domain of absolute convergence determines the complex domain. 

It is clear from the invariance of $Z(\mathbf{s})$ under $W$ and from the previous propositions that the domain of absolute convergence contains $A$. We must show that if $Z(\mathbf{s})$ converges absolutely at $\mathbf{s}$, then this point belongs to $A$. If any $W$-translate of $\mathbf{s}$ has two or more nonpositive coordinates, then $Z(\mathbf{s})$ diverges, because the series will contain infinitely many terms with absolute value bounded below by some positive constant. If some $W$-translate has all nonnegative coordinates and at most one zero coordinate, then the point lies in $A$ by definition. 

The only remaining possibility is that every $W$-translate of $\mathbf{s}$ has three positive coordinates and one negative. Let $B$ be the set of points with this property, and suppose $\mathbf{s} \in B$. Define the height of $\mathbf{s}$ as $s_1+s_2+s_3+s_4$. Note that applying $\sigma_i$ adds $4s_i$ to the height of a point. Every point in $B$ has positive height, because there is only one negative coordinate $s_i$, and two of the three other coordinates must exceed $-2s_i$. Moreover, for a point $\mathbf{s} \in B$, there is a unique sequence of $W$-translates:
\begin{equation}
\mathbf{s}, \, \sigma_{i_1}\mathbf{s}, \, \sigma_{i_2}\sigma_{i_1}\mathbf{s}, \, \sigma_{i_3}\sigma_{i_2}\sigma_{i_1}\mathbf{s}, \, \ldots
\end{equation}
with height decreasing monotonically. The heights of this sequence must converge to a lower bound $h$, so the negative coordinates of the sequence must converge to $0$. It follows that the sequence of points eventually lies in the compact region $h \leq s_1+s_2+s_3+s_4 \leq h+\epsilon$, $s_1, s_2, s_3, s_4 \geq -\epsilon$. We can choose a convergent subsequence, denoted $(\mathbf{s}_n)$. Because the negative coordinate $s_i$ of $\mathbf{s}_n$ converges to $0$, and one other coordinate $s_j$ is bounded above by $-2s_i$, the point $\lim_{n \to \infty} \, \mathbf{s}_n$ must lie on the 2-skeleton of the simplicial cone $\bar{C_0}$.

Note that the function $Z(\mathbf{s})$ on $\R^4$ is defined by a sum of positive terms. If it converges at $\mathbf{s}$, then it converges to the same value at each $\mathbf{s}_n$. This implies that the partial sums at each $\mathbf{s}_n$ are uniformly bounded above, which in turn implies that the partial sums at $\lim_{n \to \infty} \, \mathbf{s}_n$ are bounded above. We would then have that $Z(\mathbf{s})$ converges at $\lim_{n \to \infty} \, \mathbf{s}_n$, a contradiction.
\end{proof}

\section{The Geometry of $A$} \label{geometry}

What does the domain of absolute convergence $A$ look like? The question is of interest because there are few explicit examples in the literature of Tits cones for Kac-Moody groups beyond the affine, hyperbolic, and Lorentzian cases. 

The geometry of the Tits cone plays an important role in Gritsenko and Nikulin's definition of a Lorentzian root system. The bilinear form $( , )$ determines a cone $J$ of timelike vectors satisfying $(\mathbf{s}, \mathbf{s})<0$. One half of this cone, called the past cone $J^-$, intersects the fundamental or antidominant cone  $\bar{C}$. Part of the Lorentzian condition is that $A$ should coincide with $J^-$. More precisely, $\bar{C}$ is a polytope with cusps at infinity given by the fundamental weights, and finite hyperbolic volume as it intersects the hyperbolic space $(\mathbf{s}, \mathbf{s})=-1$ in $J^-$. Its $W$-translates will then cover the hyperbolic space. This occurs in the rank 3 root subsystem $H^{(3)}_{71}$ of $\Phi$.

We will see, however, that none of this holds in $\Phi$. Each fundamental weight $\omega_i$ satisfies $(\omega_i, \omega_i)=\tfrac{1}{8}$ so these vectors are spacelike, not timelike. Thus $\bar{C}$ extends beyond the cone $J^-$, and has infinite hyperbolic volume. The Tits cone $\bar{A}$ includes $J^-$ as a proper subset, and is much more geometrically complicated than $J^-$.  We will see that the intricate structure of Apollonian packings carries over to $A$.

In order to visualize $A$, we must cut down its dimension. First, since $A$ is a tube domain, it suffices to describe the real part. Because the real part of each region $w(C)$ for $w\in W$ is a simplicial cone defined by a system of homogeneous linear inequalities, the real part of $A$ is a cone through the origin in $\R^4$. It is easily shown that the real part of $A$ lies in the half-space $s_1+s_2+s_3+s_4\geq 0$. A cone on the origin in this half-space can be considered as a subset of $\RP^3$. For the rest of this section, by abuse of notation, we will consider $C$, all $w(C)$, and $A$ as subsets of $\RP^3$. The graphics in this section depict the affine section of $A$ along the hyperspace $s_1+s_2+s_3+s_4=1$. For convenience, we will refer to spheres and circles in this section, although the corresponding objects in $\RP^3$ are, more technically, ellipsoids and ellipses.  

Figure \ref{domains}, generated in Mathematica \cite{Mathematica}, show the domains $\bigcup_{\substack{ w \in W \\ \ell(w) \leq \ell}} \, w(C)$ for $\ell=0, 1, 2, 3$ in $\RP^3$. Each $w(C)$ is a 3-simplex.

\begin{figure}[H]
\includegraphics[scale=.18]{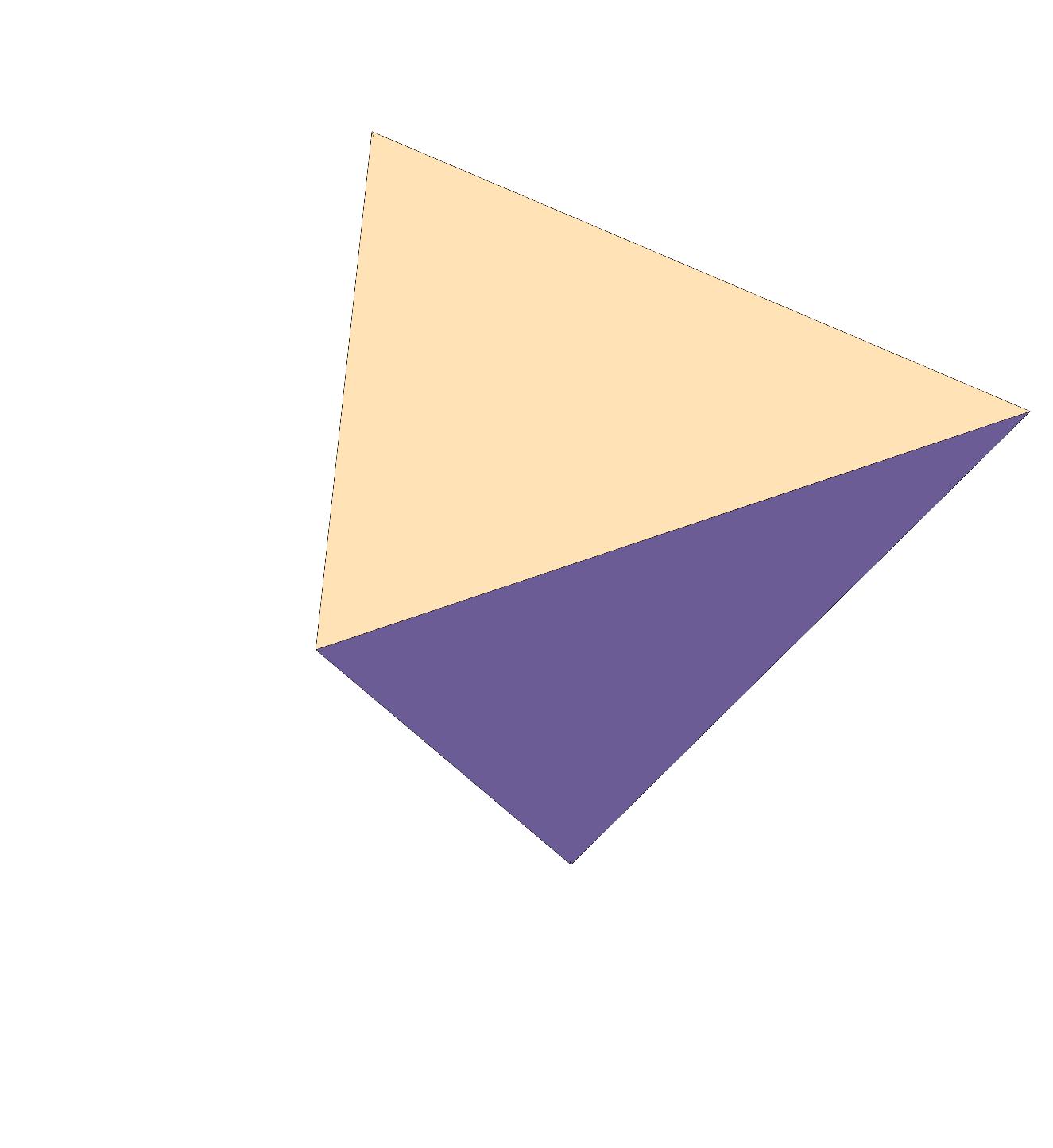}
\includegraphics[scale=.18]{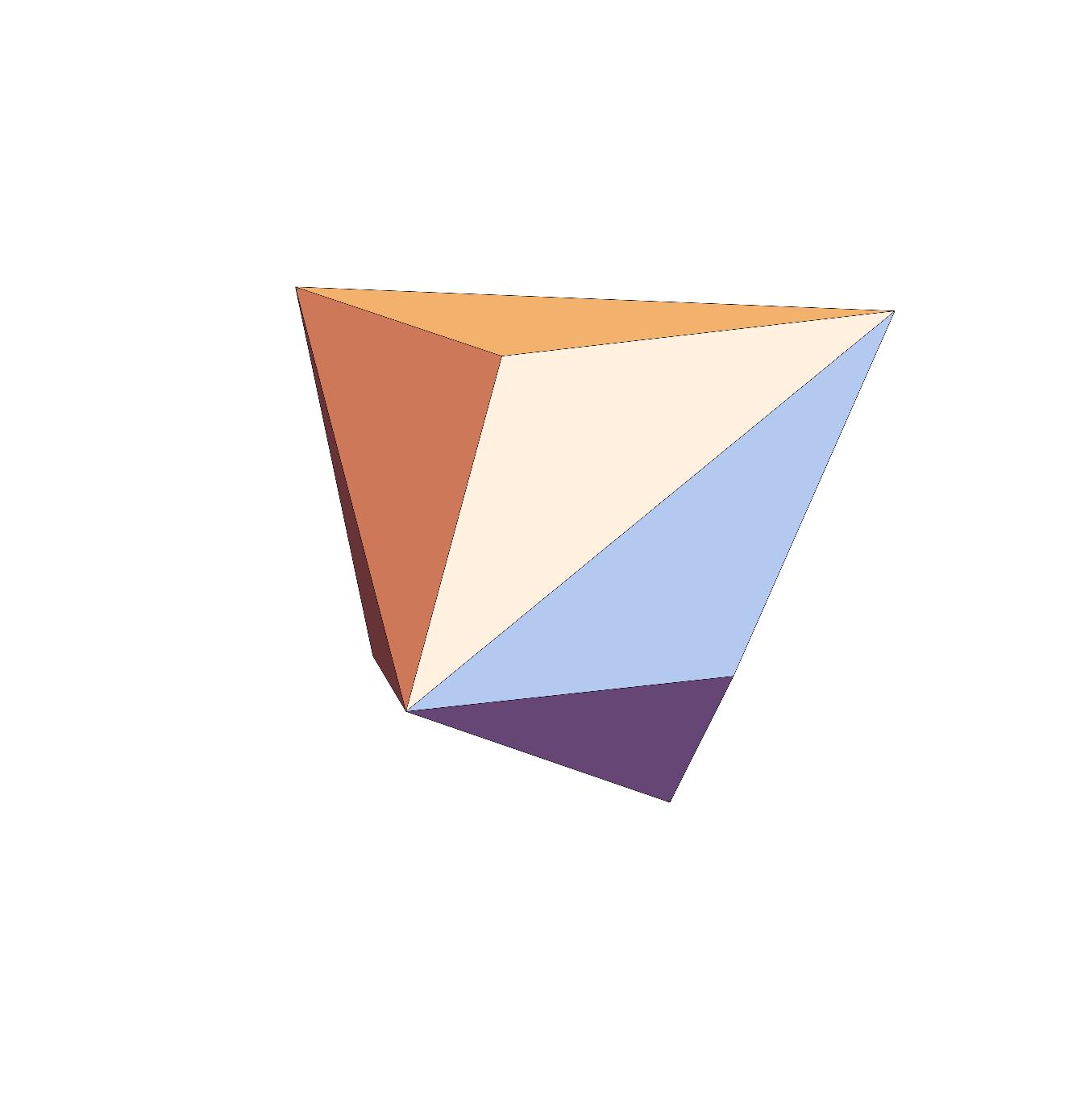}

\includegraphics[scale=.18]{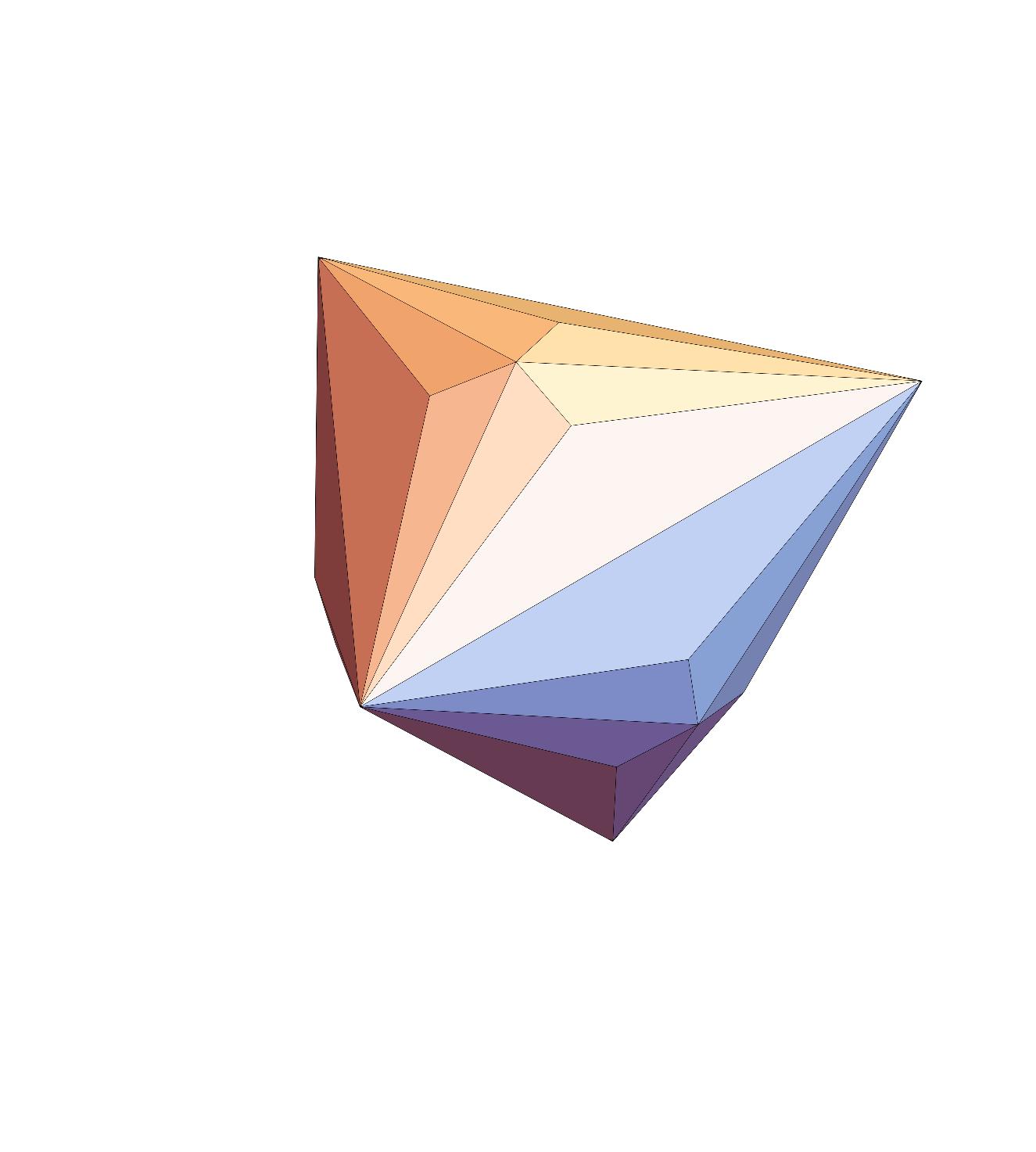}
\includegraphics[scale=.18]{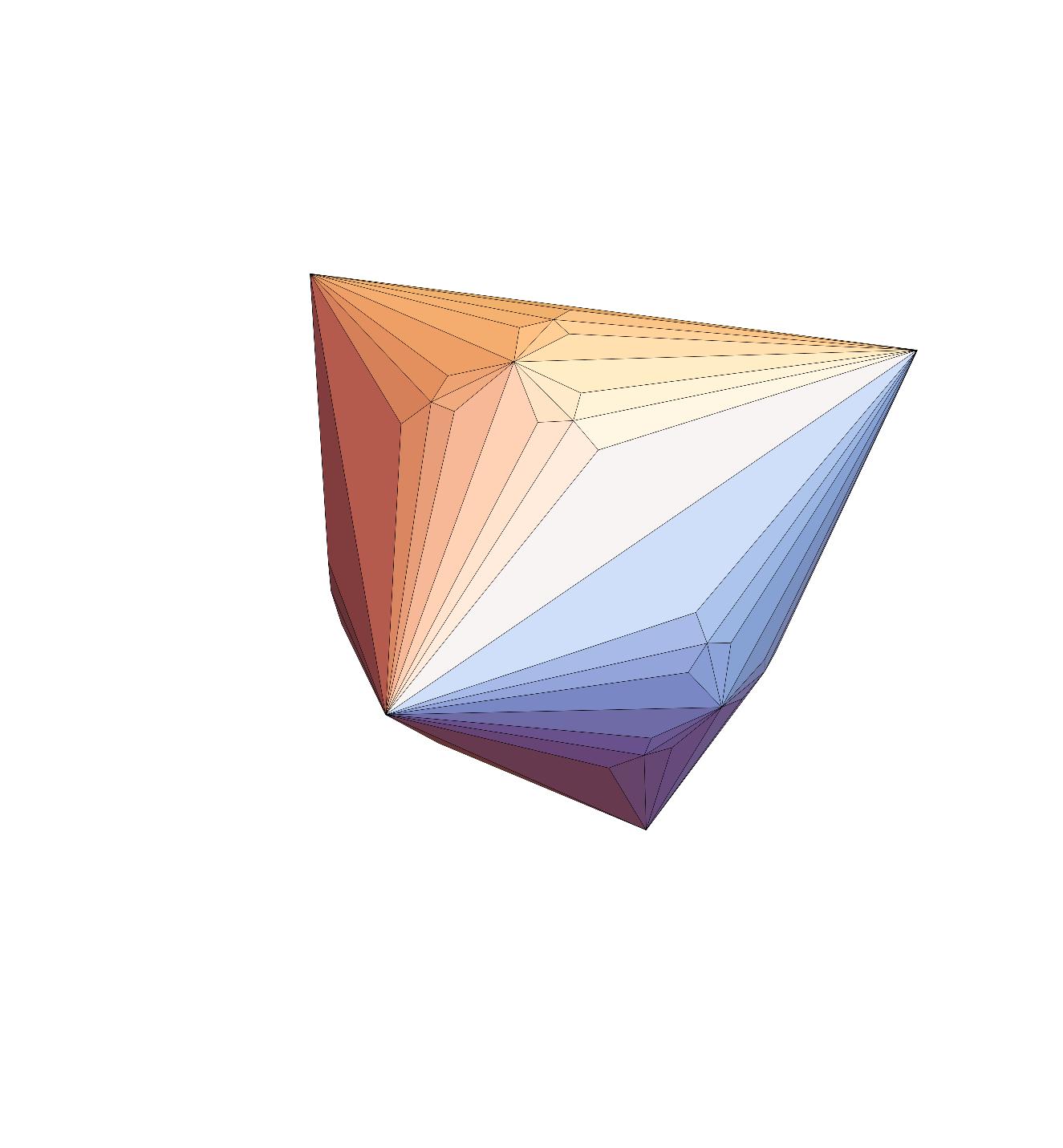}
\caption{Preliminary domains of convergence for $Z(\mathbf{s})$}
\label{domains}
\end{figure}

Define the timelike cone $J= \{ \mathbf{s} \in \RP^3 \, | \, (\mathbf{s}, \mathbf{s})<0\}$, and its boundary, the lightlike cone $N=\{ \mathbf{s} \in \RP^3 \, | \, (\mathbf{s}, \mathbf{s})=0\}$. $N$ is a sphere in projective space, and $J$ is the region enclosed by the sphere. Both are $W$-invariant. 

The 3-simplex $C$ is has vertices $\omega_1$, $\omega_2$, $\omega_3$, $\omega_4$. The six edges are segments along $s_i=s_j=0$ for some $i, j$. Each edge is tangent to $N$. If we fix a vertex $\omega_i$, the three points of tangency on edges through $\omega_i$ lie in the plane $s_1+s_2+s_3+s_4-2s_i=0$. The four planes $s_1+s_2+s_3+s_4-2s_i=0$ intersect $N$ in four mutually tangent circles. 

We use these circles as a base quadruple to generate an Apollonian packing $\mathcal{T}$ on $N$. Then, for each circle $S \in \mathcal{T}$, let $O_S$ be the open spherical cap on $N$, with boundary $S$ (chosen so that all the sets $O_S$ are disjoint). There is a unique point $\mathbf{p}_S$ such that the cone on $S$ with apex $\mathbf{p}_S$ is tangent to $N$. Let $C_S$ be the bounded cone on $O_S$ with apex $\mathbf{p}_S$, and let $C_S'$ be the unbounded cone. An image of the region $J \cup \bigcup_{S \in \mathcal{T}} \, C_S \subset \RP^3$ is shown in Figure \ref{domaininfinity}. 

\begin{figure}[h]
\center{\includegraphics[scale=.27]{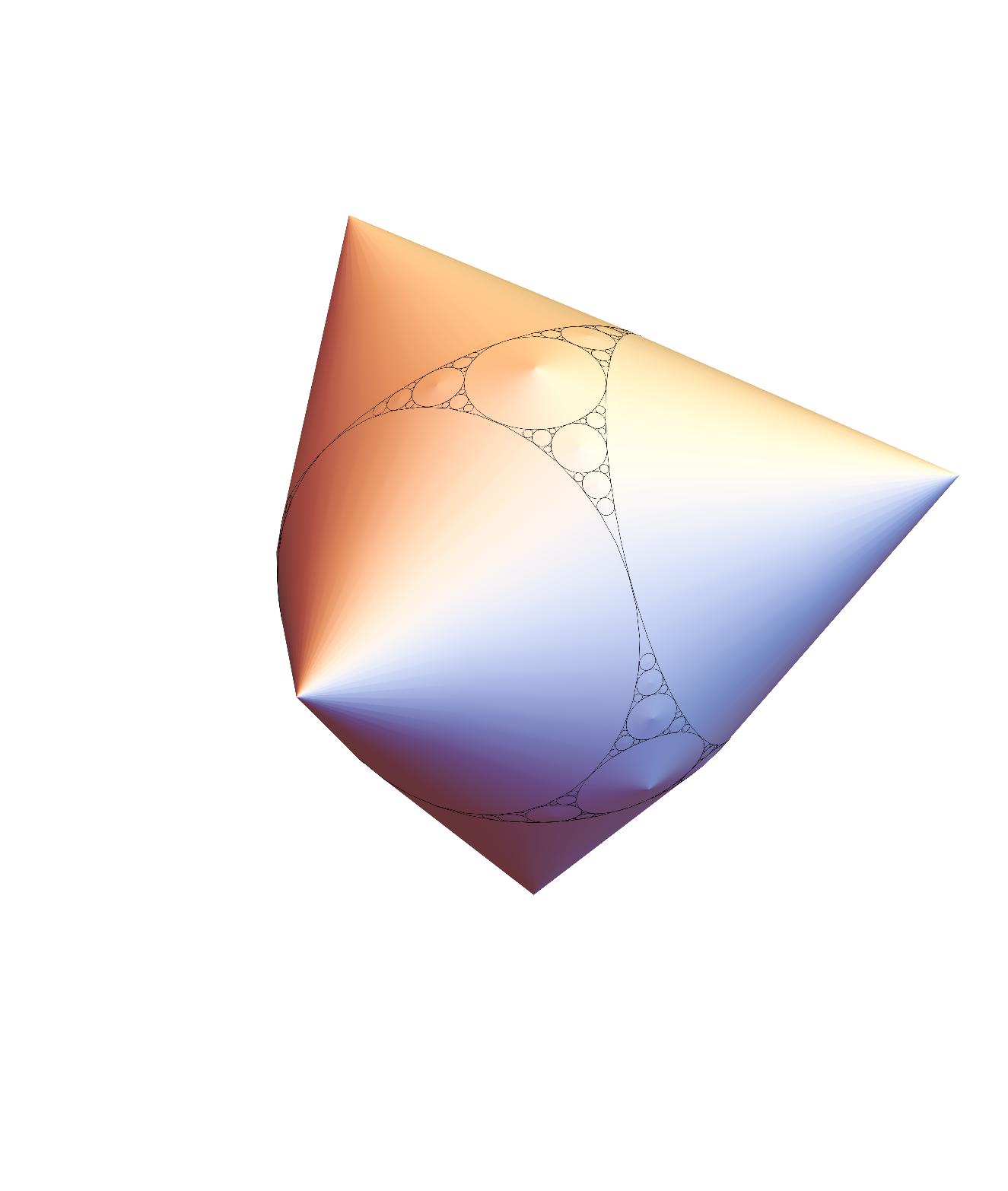}}
\caption{Full domain of convergence for $Z(\mathbf{s})$}
\label{domaininfinity}
\end{figure}

Because the boundary of each cone $C_S$ is tangent to the sphere, the line segments connecting apexes $\mathbf{p}_S$ for tangent circles $S \in \mathcal{T}$ are contained in the boundary of both cones, and tangent to the sphere. Such line segments are dense in the boundary of the region $J \cup \bigcup_{S \in \mathcal{T}} \, C_S$. The line segments connecting apexes of non-adjacent cones must intersect $J$; otherwise the two cones would intersect each other.

Let $F \subset N$ denote the fractal set of points in $N$ not on any circle $S \in \mathcal{T}$ or in any spherical cap $O_S$. 

\begin{proposition} \label{prop:unionintersection}
\begin{equation}
\bigcap_{S \in \mathcal{T}} \, C_S' \, = \, J \cup F \cup \bigcup_{S \in \mathcal{T}} \, C_S
\end{equation}
\end{proposition}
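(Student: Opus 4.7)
My plan is to prove the two inclusions separately, with the forward direction handled by tangency-based convexity arguments and the reverse reducing to a case analysis in which the exterior case is the main difficulty.

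For $J \cup F \cup \bigcup_S C_S \subseteq \bigcap_S C_S'$, I would fix $S_0 \in \mathcal{T}$ and verify each piece lies in $C_{S_0}'$. A point of $J$ is interior to $N$, so the ray from the external apex $\mathbf{p}_{S_0}$ enters $N$ through the facing cap $O_{S_0}$; a point of $F$ lies on $N \setminus \bar{O}_{S_0}$ by disjointness of caps, hence is the exit point of such a ray entering transversely through $O_{S_0}$. For $C_{S_1} \subseteq C_{S_0}'$ with $S_1 \neq S_0$, I would use convexity of $C_{S_0}'$ (as a cone from $\mathbf{p}_{S_0}$ on the spherically convex cap $O_{S_0}$): the apex $\mathbf{p}_{S_1}$ lies in $\overline{C_{S_0}'}$ (in the interior when $S_0, S_1$ are non-tangent, using the paper's preceding observation that $\overline{\mathbf{p}_{S_0}\mathbf{p}_{S_1}}$ crosses $J$ in that case; on the tangent boundary when tangent), while $O_{S_1} \subseteq N \setminus \bar{O}_{S_0} \subseteq C_{S_0}'$, so convexity places the segments comprising $C_{S_1}$ inside $\overline{C_{S_0}'}$.

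For the reverse inclusion, I take $\mathbf{x} \in \bigcap_S C_S'$ and analyze by position. Interior points of $N$ lie in $J$. Points on $N$ cannot lie on any circle $S_0 \in \mathcal{T}$: such an $\mathbf{x} \in S_0$ would be reached from $\mathbf{p}_{S_0}$ only by a tangent ray, grazing $N$ at $\mathbf{x} \in \partial O_{S_0}$ rather than in the open cap, contradicting $\mathbf{x} \in C_{S_0}'$. So either $\mathbf{x}$ lies in some $O_{S_0}$ (giving $\mathbf{x} \in C_{S_0}$) or $\mathbf{x} \in F$.

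The main obstacle is the exterior case. Here $\mathbf{x} \in C_S'$ admits two possibilities for each $S$: either $\mathbf{x} \in C_S$ (on the segment from $\mathbf{p}_S$ to the entry point in $O_S$, and we are done), or $\mathbf{x}$ lies in the far extension $C_S' \setminus \bar{C_S}$ beyond $N$ from $\mathbf{p}_S$'s perspective. The crux is ruling out that $\mathbf{x}$ simultaneously sits in the far extension of every $C_S'$. My approach is to exploit the polar symmetry induced by the bilinear form: the tangent cone from $\mathbf{x}$ to $N$ touches $N$ along a polar circle $S_{\mathbf{x}}$, and the incidence $\mathbf{x} \in C_S'$ is symmetric in $(\mathbf{x}, \mathbf{p}_S)$, equivalent to $\mathbf{p}_S$ lying in the visible cone from $\mathbf{x}$ through $O_{S_{\mathbf{x}}}$. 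I would then use the density of apexes $\mathbf{p}_S$ accumulating on the exterior of $N$ as circles in the Apollonian packing refine toward points of $F$, to extract some $S_0$ with $\mathbf{p}_{S_0}$ close enough to $\mathbf{x}$ on the same side of $N$ that the segment $[\mathbf{p}_{S_0}, \mathbf{x}]$ stays exterior to $N$, placing $\mathbf{x} \in C_{S_0}$. The hardest step is making this density argument quantitative and uniform enough to handle every exterior point, which requires precisely relating the fractal apex distribution to arbitrary positions outside $N$.
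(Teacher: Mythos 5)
Your forward inclusion and your handling of points in $J$ and on $N$ are sound and agree with the paper's argument. The gap is in the exterior case, which is the real content of the proposition, and it is twofold. First, the symmetry you invoke is false: $\mathbf{x} \in C_S'$ is \emph{not} equivalent to $\mathbf{p}_S \in C_{S_{\mathbf{x}}}'$ when one tangency circle is internal to the other. If $S_{\mathbf{x}}$ lies inside the cap $O_S$, then $\mathbf{x}$ lies in $C_S'$ (in fact in the bounded cone $C_S$), but every ray from $\mathbf{x}$ through its small visible cap $O_{S_{\mathbf{x}}}$ points toward $N$ and away from $\mathbf{p}_S$, so $\mathbf{p}_S \notin C_{S_{\mathbf{x}}}'$. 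This nested configuration is precisely the one you must land in to conclude $\mathbf{x} \in C_{S_0}$. Second, the density argument cannot be repaired: the apexes $\mathbf{p}_S$ accumulate only on the residual set on $N$ itself, since the apex of a small circle sits just outside $N$ near that circle. They are not dense in the exterior of $N$; a point deep inside one of the four large initial cones $C_S$ has no apex of any other circle anywhere near it. Moreover, even if some $\mathbf{p}_{S_0}$ were close to $\mathbf{x}$ with $[\mathbf{p}_{S_0}, \mathbf{x}]$ exterior to $N$, that would not place $\mathbf{x}$ in $C_{S_0}$, which is the specific solid region swept out by segments from $\mathbf{p}_{S_0}$ to points of $O_{S_0}$.

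The missing idea is to use the polar circle $S_{\mathbf{x}}$ directly and qualitatively, with no density estimate at all. For each $S \in \mathcal{T}$, the hypothesis $\mathbf{x} \in C_S'$ forbids $S_{\mathbf{x}}$ from crossing $S$ non-tangentially (in that configuration each apex lies outside the other cone), and also forbids $S$ from being internal to $S_{\mathbf{x}}$. Hence $S_{\mathbf{x}}$ is a connected subset of $F \cup \bigcup_{S} O_S$. If it meets some $O_{S_0}$, then since it does not meet the boundary circle $S_0$, the intersection $S_{\mathbf{x}} \cap O_{S_0}$ is both open and closed in $S_{\mathbf{x}}$, so connectedness gives $S_{\mathbf{x}} \subset O_{S_0}$; in this nested configuration $\mathbf{x}$ lies in the bounded cone $C_{S_0}$, as required. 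Otherwise $S_{\mathbf{x}} \subset F$, which is impossible for $\mathbf{x}$ strictly exterior to $N$, since $S_{\mathbf{x}}$ is then a nondegenerate circle and $F$ contains no circle (only the degenerate case $\mathbf{x} \in N$, $S_{\mathbf{x}}$ a point of $F$, survives, and you have already treated it). This is the paper's route, and it replaces your quantitative step entirely.
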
 

\begin{proof}
Given two points $\mathbf{p}_1, \mathbf{p}_2 \in \RP^3 \setminus J$, we may form two cones tangent to $N$ with these points as apexes. We will denote the open regions enclosed by these cones as $C_1'$ and $C_2'$, and the circles of tangency as $S_1$ and $S_2$ (For points on $N$, the cone becomes a half-space, and the circle becomes a single point.) If the circles $S_1$ and $S_2$ are externally disjoint, then each point is contained in the interior of the other cone. If the circles are externally tangent, then each point is on the boundary of the other cone. If the circles intersect non-tangentially, then each point is outside the other cone.  If the circles are internally tangent, then the inner point is on the boundary of the outer cone. If one circle is internal to the other, then the inner point is enclosed by the outer cone. 

First we will show that the union on the right side is contained in the intersection on the left. The ball $J$ is contained in every region $C_S'$. The set $F$ is also contained in every $C_S'$ because it does not intersect the boundary of any of these sets. Finally, a pair of circles in the packing $\mathcal{T}$ must be externally disjoint or externally tangent. It follows that each point $\mathbf{p}_S$ is contained in all the regions $\bar{C_S'}$, and therefore that every bounded region $C_S$ is contained in all the unbounded regions $C_S'$. 

Next we show the opposite inclusion. For a point $\mathbf{s}$ contained in every region $C_S'$, either $\mathbf{s} \in J$ or we may form the cone tangent to $N$ with $\mathbf{s}$ at its apex. The circle of tangency cannot intersect any circle $S \in \mathcal{T}$. This is possible only if the circle is internal to some $S$, in which case $\mathbf{s} \in C_S$, or if the circle degenerates to a single point in $F$.
\end{proof}

We now come to the main result of this section.

\begin{theorem} \label{theorem:Ageometry} We have
\begin{equation}
A=J \cup \bigcup_{S \in \mathcal{T}} \, C_S
\end{equation}
in $\RP^3$. Moreover, each vertex of each simplex $\bar{w(C)}$ is $\mathbf{p}_S$ for some $S\in \mathcal{T}$, and each edge of each $\bar{w(C)}$ is the line segment connecting the apexes of two adjacent cones in the packing. 
\end{theorem}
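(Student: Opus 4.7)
The plan is to prove the combinatorial statements (b) and (c) first, by induction on $\ell(w)$, and then deduce the set-theoretic equality (a) by combining these with Proposition~\ref{prop:unionintersection}. For the base case $w = e$ of (b) and (c), I compute that the pole of the plane $s_1 + s_2 + s_3 + s_4 - 2 s_i = 0$ (the plane of the base circle $S_i$) under the form is $\omega_i$, using $(\omega_i, \omega_i) = 1/8$ and $(\omega_i, \omega_j) = -1/8$, so each vertex of $C$ is an apex $\mathbf{p}_{S_i}$. The midpoint $(\omega_i + \omega_j)/2$ is lightlike and lies on both $S_i$ and $S_j$, so it is their tangency point, confirming (c) for $C$. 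For the inductive step, if $\bar{w(C)}$ has vertices $\mathbf{p}_{S_1^w}, \ldots, \mathbf{p}_{S_4^w}$ for a Descartes-tangent quadruple in $\mathcal{T}$, then $\bar{w\sigma_i(C)}$ shares the three vertices $w(\omega_j)$ for $j \ne i$; the new vertex $w\sigma_i(\omega_i)$ has three incident edges that are $W$-images of tangent edges (hence tangent to $N$), so it is the apex of a circle $S'$ tangent to each of the three retained $S_j^w$, and by uniqueness of the Apollonian mate $S'$ is the companion circle in $\mathcal{T}$ completing the swap.

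To prove $A \subseteq J \cup \bigcup_S C_S$, I combine (b) with Proposition~\ref{prop:unionintersection}. For any $S \in \mathcal{T}$ and any chamber $w(C)$, each of the four vertices lies in $\bar{C_S'}$, since two circles in the packing are always externally disjoint, externally tangent, or equal, and all of these cases place the corresponding apexes in each other's closed tangent-cone regions by the analysis in the proof of Proposition~\ref{prop:unionintersection}. Convexity of $\bar{C_S'}$ in an affine chart then gives $w(C) \subseteq \bar{C_S'}$, so $A \subseteq \bigcap_S \bar{C_S'}$, and by Proposition~\ref{prop:unionintersection} it suffices to check $A \cap F = \emptyset$. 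An elementary calculation on the positive orthant shows that any lightlike point of $C$ has some dominant coordinate $s_k > \sum_{j \ne k} s_j$, and hence lies in the cap $\bar{O_{S_k}}$: on $N \cap \{s_i \geq 0\}$ the equation $2 \sum s_i^2 = (\sum s_i)^2$ combined with positivity forces exactly one dominant coordinate. By $W$-invariance, $A \cap F = \emptyset$.

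For the reverse inclusion $J \cup \bigcup_S C_S \subseteq A$, the containment $J \subseteq A$ is a consequence of the standard Kac--Moody Tits cone fact that any timelike vector reduces into $\bar C$ by a sequence of $\sigma_i$. To show $\bigcup_S C_S \subseteq A$, I would cover each $C_S$ by the ``star'' of $\mathbf{p}_S$: the stabilizer of $\mathbf{p}_S$ in $W$ is conjugate to the $H^{(3)}_{71}$ Weyl group $W_3$, which acts on the plane polar to $\mathbf{p}_S$ as a hyperbolic reflection group on the disk bounded by $S$. Each chamber of the star projects radially from $\mathbf{p}_S$ onto an ideal triangle with vertices on $S$, and the hyperbolic Tits cone theorem for $W_3$ ensures these ideal triangles tile the disk, so their preimages under projection cover $C_S$. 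This last covering step is the main obstacle, since it requires carefully matching projected chamber vertices to the ideal triangle vertices of the hyperbolic tiling; the preceding induction and the bound $A \subseteq \bigcap_S \bar{C_S'}$ are relatively direct.
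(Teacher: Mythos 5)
Your induction establishing that the vertices and edges of each $\bar{w(C)}$ are apexes and tangency segments is correct and is essentially the paper's argument in different clothing: the paper observes directly that each $\sigma_i$ fixes three of the four planes $s_1+s_2+s_3+s_4-2s_j=0$ and preserves tangency, hence permutes $\mathcal{T}$ and the set of apexes, while your pole computation and the uniqueness of the Apollonian mate give the same conclusion. Your route to the inclusion $A\subseteq J\cup\bigcup_S C_S$ via Proposition \ref{prop:unionintersection} is actually more explicit than the paper's one-sentence assertion that each simplex lies in $J$ union the four cones of its quadruple, and the dominant-coordinate computation ruling out $A\cap F$ checks out: every nonnegative lightlike point with at most one vanishing coordinate does have a weakly dominant coordinate and hence lies in some $\bar{O_{S_k}}$. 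Note, though, that your vertex argument places $A$ in $\bigcap_S\bar{C_S'}$ rather than $\bigcap_S C_S'$, so you only obtain $A\subseteq J\cup F\cup\bigcup_S\bar{C_S}$ and still owe a word about the lateral boundaries $\partial C_S$, where the $2$-skeleton accumulates.

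The genuine gap is exactly where you flag it: $\bigcup_S C_S\subseteq A$. The star of $\mathbf{p}_S$ equals the cone with apex $\mathbf{p}_S$ on $\bigcup_{w\in W_3}w(F_0)$, where $F_0$ denotes the face of $\bar{C}$ opposite the relevant vertex. Each face $w(F_0)$ is a triangle circumscribing the circle in which its plane meets $N$, so it has points both inside and outside $\bar{J}$; for a ray from $\mathbf{p}_S$ that meets such a face at a point outside $\bar{J}$, the chamber covers only the segment up to that face, which may stop short of the cap $O_S$, and the hyperbolic tiling of the radial projections does not by itself show that along every ray some face is met at or beyond the cap. So the covering of $C_S$ by the star is not established (it is true, but only a posteriori from the theorem). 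The paper closes this step with a soft argument you did not use: the domain of absolute convergence of an exponential series is convex; the tangency segments joining apexes of adjacent circles are dense in $\partial\bigl(J\cup\bigcup_S C_S\bigr)$ and already lie in $\bar{A}$ by the combinatorial part; hence $\bar{A}$ contains that boundary and, by convexity, the whole region, which yields both $C_S\subseteq A$ and $J\subseteq A$ at once. I would also not rest $J\subseteq A$ on ``the standard Tits cone fact'': containment of the \emph{closed} timelike cone in the Tits cone characterizes hyperbolic type, and $\Phi$ is not hyperbolic (its rank $3$ subdiagrams are already indefinite), so the open containment, while true here, needs the convexity argument above or a separate discreteness argument rather than a citation.
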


\begin{proof} 
The four points $\omega_i$ are the apexes of four cones tangent to $N$, which intersect $N$ in the base quadruple of $\mathcal{T}$. The four reflections $\sigma_i$, considered as transformations of $\RP^3$, map $N$ to itself. Because each circle on $N$ is contained in a unique plane, and $\sigma_i$ maps planes to planes, $\sigma_i$ must map circles to circles on $N$. Each $\sigma_i$ fixes the plane $s_i=0$. For $j \neq i$, $\sigma_i$ maps the plane $s_1+s_2+s_3+s_4-2s_j=0$ to itself. Thus $\sigma_i$ maps three of the four circles in the base quadruple to themselves, and moves the fourth circle. Because $\sigma_i$ must preserve tangency, it maps the packing $\mathcal{T}$ to itself. From the action on the base quadruple, we can see that $W$ acts on $\mathcal{T}$ in the standard way.

Because the transformations $\sigma_i$ map the packing $\mathcal{T}$ to itself, and because they preserve tangency, they must map the collection of cones $C_S$ for $S \in \mathcal{T}$ to itself. In particular, the set of apexes $\mathbf{p}_S$ for $S \in \mathcal{T}$ is the orbit of $W$ on the four initial points $\omega_1$, $\omega_2$, $\omega_3$, $\omega_4$. This set is the zero-skeleton of $A$. Each $w(\bar{C})$ is a simplex with vertices at the four points $\mathbf{p}_S$ corresponding to a Descartes quadruple of circles $S\in \mathcal{T}$. The edges of this simplex are line segments, tangent to $N$, connecting apexes of adjacent cones--these form the one-skeleton of $A$. The faces and interior of the simplex lie in the union of $J$ with the four cones of the Descartes quadruple. 

It remains to show that $A \supseteq J \cup \bigcup_{S \in \mathcal{T}} \, C_S$. We have remarked that the boundary of $A$, specifically the 2-skeleton, is dense in the boundary of each cone $C_S$. Since $A$ is convex, it follows that $C_S$ is contained in $A$. Since the cones $C_S$ intersect the boundary of $J$ in another dense set, it follows that $J$ is contained in $A$.
\end{proof}

Note that even conditional convergence is impossible along the 2-skeleton of $A$. Conditional convergence may be possible at other points of $\partial A$. The question of conditional convergence will not be discussed further here. 

Recall that the set $B$ from the proof of Theorem \ref{theorem:A} consists of points $\mathbf{s}$ such that every $W$-translate $w(\mathbf{s})$ has three positive coordinates and one negative. We can classify a point $\mathbf{s}\in \RP^3$, assuming $s_1+s_2+s_3+s_4\geq 0$, based on the the number of negative coordinates of its $W$-translates.

\begin{proposition}
Any point of $\RP^3 \setminus \bar{A}$ has a $W$-translate with three negative coordinates, or two negative coordinates and one zero coordinate.
\end{proposition}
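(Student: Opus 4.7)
I would prove the contrapositive. Suppose $\mathbf{s}\in\RP^3$ has no $W$-translate with three negative coordinates and none with two negative coordinates together with a zero coordinate. Modulo the $\RP^3$-equivalence $\mathbf{s}\sim -\mathbf{s}$, this forbids every $W$-translate from having sign type $\{3,1,0\}$, $\{3,0,1\}$, or $\{2,1,1\}$ (unordered triples counting coordinates of one sign, the opposite sign, and zero). The types still allowed are $\{4,0,0\}$, $\{2,2,0\}$, $\{2,0,2\}$, $\{1,1,2\}$, $\{1,0,3\}$, and $\{0,0,4\}$. My claim is that under these constraints $\mathbf{s}\in\bar{A}$.

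Types $\{4,0,0\}$, $\{2,0,2\}$, and $\{1,0,3\}$ correspond to points of $\bar{C}\subseteq\bar{A}$, so any $W$-translate of one of these forms forces $\mathbf{s}\in\bar{A}$ by $W$-invariance of $\bar{A}$; the trivial $\{0,0,4\}$ is ignored. Type $\{1,1,2\}$ is actually excluded by the hypothesis: a canonical representative $(a,-b,0,0)$ with $a\geq b>0$ has $\sigma_1\mathbf{s}=(-a,2a-b,2a,2a)$, which is of type $\{3,1,0\}$ since $a\geq b$ forces $2a-b>0$. The substantive remaining case is therefore that every $W$-translate of $\mathbf{s}$ has sign type $\{2,2,0\}$: two strict negatives, two strict positives, and no zeros.

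For this case, fix such a translate $\mathbf{t}$, label its negative positions $i,j$ and positive positions $k,l$ (with $\{i,j,k,l\}=\{1,2,3,4\}$), and order so that $|t_i|\leq|t_j|$ and $t_k\leq t_l$. Using the coordinate formula $\sigma_n(\mathbf{t})_n=-t_n$ and $\sigma_n(\mathbf{t})_m=t_m+2t_n$ for $m\neq n$, each $\sigma_n\mathbf{t}$ must again have type $\{2,2,0\}$. A direct case analysis of the resulting sign patterns yields the inequalities $|t_i|/2<t_k<|t_j|/2$ and $|t_i|/2<t_l<|t_j|/2$ from $\sigma_k$ and $\sigma_l$, and $t_k<2|t_i|<t_l$ from $\sigma_i$. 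Now apply $\sigma_j$: its coordinate at position $j$ is $|t_j|>0$, while at positions $i$, $k$, $l$ the coordinates are $-|t_i|-2|t_j|$, $t_k-2|t_j|$, $t_l-2|t_j|$ respectively. Since $t_k,t_l<|t_j|/2<2|t_j|$, all three of these are strictly negative, so $\sigma_j\mathbf{t}$ has sign type $\{3,1,0\}$, contradicting the hypothesis.

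The main obstacle is the opening enumeration: listing the allowed sign types and verifying that only $\{2,2,0\}$ gives a substantive case, including the somewhat subtle self-exclusion of $\{1,1,2\}$ via a single reflection. Once the $\{2,2,0\}$ case is isolated, the rest is a direct computation. Edge cases $|t_i|=|t_j|$ and $t_k=t_l$ produce empty constraint intervals in the derivation and yield an immediate contradiction at $\sigma_k$ or $\sigma_i$, without needing $\sigma_j$.
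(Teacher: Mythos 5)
There is a genuine gap, and it originates in your very first step. The sign conditions in the proposition are not invariant under $\mathbf{s}\mapsto-\mathbf{s}$, and the paper does not intend them to be read that way: as stated in the sentence immediately preceding the proposition, a point of $\RP^3$ is normalized so that $s_1+s_2+s_3+s_4\geq 0$, and one then counts the negative coordinates of the literal vectors $w(\mathbf{s})$. Under that reading, a translate with pattern $(+,+,+,-)$ is perfectly compatible with the hypothesis of your contrapositive, even though its negative has pattern $(-,-,-,+)$. By passing to unordered sign types modulo $\pm$ you forbid $(+,+,+,-)$ together with $(-,-,-,+)$; this strengthens the contrapositive hypothesis and silently discards exactly the hard case, namely the set $B$ of points all of whose translates have three positive coordinates and one negative. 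That set is nonempty --- the proposition that follows this one identifies it with $\partial A$ minus the $2$-skeleton --- and proving that such points lie in $\bar{A}$ is where the substance is (the paper handles it via the limiting argument in Theorem \ref{theorem:A} and the geometry of Theorem \ref{theorem:Ageometry}).

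The damage is not confined to the enumeration. In your $\{2,2,0\}$ analysis, the upper bounds $t_k,t_l<|t_j|/2$ are obtained by insisting that $\sigma_k\mathbf{t}$ and $\sigma_l\mathbf{t}$ again have exactly two negative coordinates; under the correct reading they may instead have just one (the coordinate at position $k$, resp.\ $l$), so those bounds do not follow, and without them $\sigma_j\mathbf{t}$ need not acquire three negative entries --- it can itself land in the allowed one-negative pattern. Likewise the claimed self-exclusion of type $\{1,1,2\}$ fails: $\sigma_1(a,-b,0,0)=(-a,2a-b,2a,2a)$ has one negative coordinate, not three, so it violates nothing; for that point one must instead apply $\sigma_2$ and iterate, and whether the iteration ever produces three negatives is precisely the question of whether the point lies outside $\bar{A}$. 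This is why the paper's proof is geometric rather than combinatorial: it places $\mathbf{s}\notin\bar{A}$ in the cone opposite some $C_S'$ using the packing $\mathcal{T}$ and the simplicial decomposition of $A$ from Theorem \ref{theorem:Ageometry}, and reads off three reversed defining inequalities of a single simplex $w(C)$ in one step. A purely sign-based argument would have to rule out the orbit remaining in the one-negative pattern forever, which is not a finite check and cannot be done reflection by reflection.
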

\begin{proof}
Given a point $\mathbf{s} \in \RP^3 \setminus \bar{A}$, we can form the cone tangent to $N$ with apex $\mathbf{s}$. As in the proof of \ref{prop:unionintersection}, the circle of tangency cannot be internal to any circle in $\mathcal{T}$. It must contain some point of the boundary of some circle of $\mathcal{T}$ internally, and then, because it contains a neighborhood of that point, it must in fact contain a circle $S \in \mathcal{T}$ internally. It follows that the point $\mathbf{s}$ must be contained in the region bounded by the cone opposite to $C_S'$, i.e. the cone with apex $\mathbf{p}_S$ and half-lines in the opposite direction of those in $C_S'$. 

As demonstrated in Theorem \ref{theorem:Ageometry}, the region $C_S$ is contained in a union of simplices $w(C)$ for those $w \in W$ which map the base quadruple to a quadruple involving $S$. Each of these simplices is defined by four inequalities. If we drop the inequality which is strictly satisfied by $\mathbf{p}_S$, we obtain an unbounded triangular cone with apex $\mathbf{p}_S$. The region $C_S'$ is the union of the regions enclosed by these unbounded triangular cones. A point in the cone opposite $C_S'$ must lie in the opposite of one of the these triangular cones. Thus, it satisfies three inequalities opposite to those which define the triangular cone. If it lies opposite the interior of a triangular cone, it satisfies the three inequalities strictly. 

By applying a $W$-translation which maps $S$ to a circle of the base quadruple, we find that some $w(\mathbf{s})$ has three nonpositive coordinates, with at most one coordinate equal to zero.
\end{proof}

The following proposition describes the region $B$.

\begin{proposition}
Let $\mathbf{s}$ be a point of $\partial A$ which is not in the 2-skeleton of $A$. Then every $W$-translate of $\mathbf{s}$ has exactly three positive coordinates and one negative. Thus $B$ is the boundary $\partial A$ with the two-skeleton of $A$ removed.
\end{proposition}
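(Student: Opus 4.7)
The plan is to prove $B = \partial A \setminus (\text{2-skeleton of } A)$ by establishing the two inclusions, combining the dichotomy from the proof of Theorem \ref{theorem:A} with a continuity argument at the boundary of $A$.

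The inclusion $B \subseteq \partial A \setminus \text{2-skel}$ is quick from prior results. The proof of Theorem \ref{theorem:A} shows $Z(\mathbf{s})$ diverges at every $\mathbf{s} \in B$, so $B \cap A = \emptyset$. A point of the 2-skeleton has a $W$-translate in $\bar C$ with at least one zero coordinate, which is incompatible with the $(+,+,+,-)$ pattern that defines $B$, so $B$ is disjoint from the 2-skeleton. And by the previous proposition, a point outside $\bar A$ has a $W$-translate with three nonpositive coordinates and at most one zero among them, again incompatible with the $B$ pattern. Hence $B \subseteq \bar A \setminus (A \cup \text{2-skel}) = \partial A \setminus \text{2-skel}$.

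For the reverse inclusion, let $\mathbf{s} \in \partial A \setminus \text{2-skel}$ and choose a sequence $\mathbf{s}_n \in A$ with $\mathbf{s}_n \to \mathbf{s}$. Since $Z(\mathbf{s}_n)$ converges, the argument in the proof of Theorem \ref{theorem:A} guarantees that every $W$-translate of each $\mathbf{s}_n$ has at most one nonpositive coordinate. On the other hand, because $\mathbf{s} \notin A \cup \text{2-skel}$, no $W$-translate of $\mathbf{s}$ lies in $\bar C$, so every translate has at least one strictly negative coordinate. It then suffices to rule out two strict negatives or a zero coordinate in any translate $w(\mathbf{s})$. The two-negatives case is immediate by continuity: if $w(\mathbf{s})$ has two strictly negative entries, then $w(\mathbf{s}_n) \to w(\mathbf{s})$ inherits those strict negatives for large $n$, contradicting the at-most-one bound. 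For a zero coordinate, suppose $(w(\mathbf{s}))_i = 0$ and $(w(\mathbf{s}))_j < 0$; the at-most-one bound applied to $w(\mathbf{s}_n)$ forces $(w(\mathbf{s}_n))_i > 0$ strictly for $n$ large. Then $\sigma_i w(\mathbf{s}_n)$ is another $W$-translate of $\mathbf{s}_n$, whose $i$-th coordinate is $-(w(\mathbf{s}_n))_i < 0$ and whose $j$-th coordinate equals $(w(\mathbf{s}_n))_j + 2(w(\mathbf{s}_n))_i \to (w(\mathbf{s}))_j < 0$, so both are strictly negative for $n$ large, again violating the at-most-one bound. Combining, every $W$-translate of $\mathbf{s}$ has exactly three strictly positive and one strictly negative coordinate, i.e., $\mathbf{s} \in B$.

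The main obstacle I anticipate is the zero-coordinate case: one has to apply the at-most-one-nonpositive bound twice, first to $w(\mathbf{s}_n)$ itself to pin $(w(\mathbf{s}_n))_i$ strictly above zero, and then to $\sigma_i w(\mathbf{s}_n)$ to extract the contradiction. The clean way to run this is to treat the two applications as independent consequences of convergence of $Z(\mathbf{s}_n)$, relying on the fact that the zero coordinate in the limit cannot be approached from below through points of $A$ without creating a second nonpositive coordinate along the way.
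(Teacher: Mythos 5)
Your proof is correct, but it takes a genuinely different route from the paper's. The paper argues geometrically from Theorem \ref{theorem:Ageometry}: each simplex $\bar{w(C)}$ has vertices at the four apexes $\mathbf{p}_{S_1},\dots,\mathbf{p}_{S_4}$ of a Descartes quadruple in $\mathcal{T}$, and a case analysis (does $\mathbf{s}$ lie on some $\partial C_{S_i}$, or in an interstitial region between three circles?) shows that $\mathbf{s}$ is on the correct side of exactly three of the four face planes of every such simplex; pulling back by $w^{-1}$ gives the sign pattern directly. You instead work analytically: you combine the divergence criterion from the proof of Theorem \ref{theorem:A} (a $W$-translate with two nonpositive coordinates forces divergence) with a limiting argument along a sequence $\mathbf{s}_n \in A$ converging to $\mathbf{s}$, and you use the explicit coordinate action $s_i \mapsto -s_i$, $s_j \mapsto s_j + 2s_i$ of $\sigma_i$ to rule out a zero coordinate in any translate by manufacturing a second translate of $\mathbf{s}_n$ with two negative entries. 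Both arguments are sound; your handling of the zero-coordinate case is the delicate step and you execute it correctly, since $(w(\mathbf{s}_n))_j + 2(w(\mathbf{s}_n))_i \to (w(\mathbf{s}))_j < 0$ exactly because the $i$-th coordinate tends to zero. The paper's version fits the section's aim of describing the geometry of $A$ explicitly and identifies \emph{which} inequality fails and why, distinguishing fractal interstitial points from cone-boundary points; yours is more elementary at this step, needing only the preceding proposition (to place $B$ inside $\bar{A}$) and the convergence dichotomy rather than the full cone-and-packing description of each simplex, and the device of applying $\sigma_i$ to push a vanishing coordinate negative is a reusable trick for analyzing Tits-cone boundaries purely in weight coordinates.
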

\begin{proof}
It suffices to show that $\mathbf{s}$ satisfies exactly three of the four inequalities which define each simplex $w(C)$ for $w \in W$. Then we see that the translate of  $\mathbf{s}$ by every $w^{-1}$ has three positive coordinates and one negative. The simplex $w(C)$ has vertices $\mathbf{p}_{S_1}$, $\mathbf{p}_{S_2}$, $\mathbf{p}_{S_3}$, $\mathbf{p}_{S_4}$ for a Descartes quadruple of circles $S_1, S_2, S_3, S_4 \in \mathcal{T}$. There are two cases: if $\mathbf{s}$ does not lie on $\partial C_{S_i}$ for $i=1, 2, 3, 4$, then since $\mathbf{s} \in \partial A$, it must lie on the surface in the interstitial region between three of the circles $S_i$. Assume that $\mathbf{s}$ lies between $S_1$, $S_2$, and $S_3$. Then $\mathbf{s}$ is on the right side of the plane through $\mathbf{p}_{S_1}$, $\mathbf{p}_{S_2}$, $\mathbf{p}_{S_4}$, the plane through $\mathbf{p}_{S_1}$, $\mathbf{p}_{S_3}$, $\mathbf{p}_{S_4}$, and the plane through $\mathbf{p}_{S_2}$, $\mathbf{p}_{S_3}$, $\mathbf{p}_{S_4}$, but it is on the wrong side of the plane through $\mathbf{p}_{S_1}$, $\mathbf{p}_{S_2}$, $\mathbf{p}_{S_3}$. 

For the other case, assume that $\mathbf{s}$ lies on $\partial C_{S_1}$. Then there are three line segments connecting  $\mathbf{p}_{S_1}$ to $\mathbf{p}_{S_2}$, $\mathbf{p}_{S_3}$, and $\mathbf{p}_{S_4}$. The point $\mathbf{s}$ must lie between two of these segments. Assume that it lies between the segment to $\mathbf{p}_{S_2}$ and the segment to $\mathbf{p}_{S_3}$. Then $\mathbf{s}$ is on the right side of the plane through $\mathbf{p}_{S_1}$, $\mathbf{p}_{S_2}$, $\mathbf{p}_{S_4}$, the plane through $\mathbf{p}_{S_1}$, $\mathbf{p}_{S_3}$, $\mathbf{p}_{S_4}$, and the plane through $\mathbf{p}_{S_2}$, $\mathbf{p}_{S_3}$, $\mathbf{p}_{S_4}$, but it is on the wrong side of the plane through $\mathbf{p}_{S_1}$, $\mathbf{p}_{S_2}$, $\mathbf{p}_{S_3}$.
\end{proof}

The results of this section allow us to rediscover the geometry of Apollonian packings with just the Descartes quadratic form as a starting point. This raises the natural question: what generalizations of Apollonian packings can we obtain by starting from a different quadratic form?

\bibliography{Apollonianbib}
\bibliographystyle{amsplain}

\end{document}